\newcommand{\T}{\theta}
\renewcommand{\theequation}{\theequation. \arabic{equation}}
\numberwithin{equation}{section}
\newtheorem{thm}{Theorem}[section]
\newtheorem{lem}[thm]{Lemma}
\newtheorem{rem}[thm]{Remark}
\newtheorem{prop}[thm]{Proposition}
\newtheorem{defn}[thm]{Definition}
\def\squarebox#1{\hbox to #1{\hfill\vbox to #1{\vfill}}}
\begin{document}
\title[An extension of Ramanujan's reciprocity theorem]
{Extensions of Ramanujan's reciprocity theorem and the Andrews--Askey integral}
\author{Zhi-Guo Liu}
\footnote{Dedicated to the memory of my parents}\\
\footnote{This work was supported by the National Science Foundation of China
(Grant No. 11571114)  and Shanghai Key Laboratory of PMMP}
\date{\today}
\address{East China Normal University, Department of Mathematics, 500 Dongchuan Road,
Shanghai 200241, P. R. China} \email{zgliu@math.ecnu.edu.cn;
liuzg@hotmail.com}
\thanks{ 2010 Mathematics Subject Classifications :  05A30, 11F27, 33D05, 33D15,  32A05,  32A10.}
\thanks{ Keywords: $q$-series, $q$-derivative, $q$-partial differential equations, $q$-integrals,  $q$-identities, analytic functions, Al--Salam--Verma integral, Andrews--Askey integral,  Ramanujan's reciprocity theorem, theta functions}
\begin{abstract}
Ramanujan's reciprocity theorem may be considered as a three-variable extension of Jacobi's triple product identity.
Using the method of $q$-partial differential equations,  we extend Ramanujan's reciprocity theorem to a seven-variable reciprocity formula.  The Andrews--Askey integral is a $q$-integral having four parameters with base $q$. Using the same method we extend the Andrews--Askey integral
formula to a $q$-integral formula which has seven parameters with base $q$.
\end{abstract}
\maketitle
\section{Introduction}
\noindent In this paper we assume, unless otherwise stated,  that $|q|<1$ and use the standard product notation
\[
(a;q)_0 = 1, \quad (a;q)_n = \prod_{k=0}^{n-1}(1-aq^k)\quad
\text{and}\quad
(a;q)_\infty =\prod_{k=0}^\infty (1-aq^k).
\]
If $n$ is an integer or $\infty$,  the multiple $q$-shifted factorials are defined as
\begin{equation*}
(a_1, a_2,...,a_m; q)_n=(a_1;q)_n(a_2;q)_n \ldots (a_m;q)_n.
\end{equation*}

The celebrated Jacobi triple product identity is stated in the following proposition
(see, for example \cite[p. 1]{ChanHC} and \cite[p. 15]{Gas+Rah}).
\begin{prop}\label{jacobitri} For $x\not=0$, we have the triple product identity
\[
(q, x, q/x; q)_\infty=\sum_{n=-\infty}^\infty (-1)^n q^{n(n-1)/2} x^n.
\]
\end{prop}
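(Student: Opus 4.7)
My plan is to use the classical $q$-difference equation method. Let $f(x)$ denote the right-hand side and $g(x):=(q,x,q/x;q)_\infty$ the left-hand side. Since $|q|<1$, the Gaussian factor $q^{n(n-1)/2}$ makes the bilateral series for $f$ converge uniformly on compact subsets of $\Co$, and the infinite products defining $g$ converge there as well, so both $f$ and $g$ are analytic on $\Co$.

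First I would verify that $f$ and $g$ satisfy the same first-order $q$-difference equation $h(qx)=-x^{-1}h(x)$. For $f$, this follows from the reindexing $n\mapsto n-1$ in the bilateral sum. For $g$, it follows from the elementary factorizations $(x;q)_\infty=(1-x)(qx;q)_\infty$ and $(q/x;q)_\infty=(1-q/x)(q^2/x;q)_\infty$ together with a short simplification.

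Next I would compare zero sets. The product $g$ has simple zeros precisely at $x\in q^\Z$. The involution $n\leftrightarrow 1-n$ in the series shows $f(1)=0$, and iterating the functional equation spreads this to $f(q^k)=0$ for every $k\in\Z$, with each such zero simple. Hence $R(x):=f(x)/g(x)$ continues to an analytic function on $\Co$. The shared $q$-difference equation then yields $R(qx)=R(x)$, so $R$ is determined by its values on a compact fundamental annulus and is therefore bounded on $\Co$. The removable-singularity theorem applied at $0$ and $\infty$ upgrades $R$ to a bounded entire function, so by Liouville $R\equiv c(q)$ for some constant depending only on $q$.

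The only nontrivial remaining step is to identify $c(q)=1$, and this is where I expect the main work. Specializing at $x=-1$ gives $g(-1)=2(q;q)_\infty(-q;q)_\infty^2$ and $f(-1)=\sum_{n\in\Z}q^{n(n-1)/2}$. Splitting the latter into even and odd parts and re-applying the identity already established (with base $q^4$ in place of $q$ and a suitable choice of the free variable) expresses $f(-1)$ as $c(q^4)$ times an infinite product, yielding the self-referential relation $c(q)=c(q^4)$. Iterating, $c(q)=\lim_{m\to\infty}c(q^{4^m})$, and inspecting the leading behaviour as $q\to 0$ — where both $f(x)$ and $g(x)$ reduce to $1-x$ — pins this limit down to $1$, completing the proof.
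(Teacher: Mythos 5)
Your argument is sound, but note that the paper does not prove Proposition~\ref{jacobitri} at all: it is quoted as a classical result with references to \cite{ChanHC} and \cite{Gas+Rah}, so there is no in-paper proof to compare against. What you have written is essentially the standard Gauss-style proof (functional equation, zero matching, Liouville, then the $q\mapsto q^4$ self-similarity to pin the constant), and every step checks out. Two small remarks. First, your claim that the zeros of $f$ at $q^{\Z}$ are \emph{simple} is asserted but not proved; fortunately it is not needed, since for $R=f/g$ to be analytic it suffices that $g$ has only simple zeros there and that $f$ vanishes at those points. Second, the constant-identification step does work, but it hinges on a product identity you leave implicit: writing $f(-1)=\sum_n q^{n(n-1)/2}=2\sum_k q^{2k^2+k}$ and recognizing the latter as the base-$q^4$ series at $x=-q^3$ gives
\[
2\,c(q^4)\,(q^4;q^4)_\infty(-q;q^4)_\infty(-q^3;q^4)_\infty=c(q)\cdot 2\,(q;q)_\infty(-q;q)_\infty^2,
\]
and one must verify that the two infinite products agree; they do, via $(q;q)_\infty(-q;q)_\infty=(q^2;q^2)_\infty$ and $(-q;q)_\infty=(-q;q^2)_\infty(-q^2;q^2)_\infty$, whence $c(q)=c(q^4)$ and the $q\to 0$ limit (where both sides of the identity tend to $1-x$) forces $c\equiv 1$. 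With those details filled in, the proof is complete and self-contained.
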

This identity is among the most important identity in mathematics, which
has  many interesting applications in number theory, combinatorics,
analysis, algebra and mathematical physics. Some amazing extensions of this identity
have been made by various authors. Ramanujan's $_1\psi_1$ summation formula and
Bailey $_6\psi_6$ summation formula both contain this identity as a
special case, and  may be considered as two important extensions of this identity,
and these two extensions have wider applications than  Jacobi's triple product identity.

Ramanujan's reciprocity theorem and the Andrews--Askey integral formula may also be regarded as two
extensions of Jacobi's triple product identity. 

In this paper we will use the method of
$q$-partial differential equations to extend
Ramanujan's reciprocity theorem and Andrews--Askey integral formula to two more general $q$-formulae.

For simplicity, in this paper we use $\Delta(u, v)$ to denote the theta function
\begin{equation}
v(q, u/v, qv/u; q)_\infty.
\label{theta:eqn1}
\end{equation}
The $q$-binomial coefficients are the $q$-analogs of the binomial coefficients, which are defined by
\[
{n \choose k}_{q}=\frac{(q;q)_n}{(q;q)_k(q;q)_{n-k}}.
\]

As usual, the basic hypergeometric series or $q$-hypergeometric series
${_r\phi_s}$ is defined by
\begin{equation*}
{_r\phi_s} \left({{a_1, a_2, ..., a_{r}} \atop {b_1, b_2, ...,
b_r}} ;  q, z  \right) =\sum_{n=0}^\infty \frac{(a_1, a_2, ...,
a_{r};q)_n} {(q,  b_1, b_2, ..., b_s ;q)_n}\left((-1)^n q^{n(n-1)/2}\right)^{1+s-r} z^n.
\end{equation*}

Now we introduce the definition of the Thomae--Jackson $q$-integral in $q$-calculus,
which was introduced by Thomae \cite{Thomae} and Jackson \cite{Jackson1910}.
\begin{defn}
Given a function $f(x)$, the Thomae--Jackson $q$-integral of $f(x)$ on $[a, b]$ is defined by
\[
\int_{a}^b f(x)d_q x=(1-q)\sum_{n=0}^\infty [bf(bq^n)-af(aq^n)]q^n.
\]
\end{defn}
If the function $f(x)$ is continuous on $[a, b]$,  then, one can  deduce that
\[
\lim_{q\to 1}\int_{a}^b f(x)d_q x=\int_{a}^b f(x) dx.
\]

In $1981,$ Andrews and Askey \cite{AndrewsAskey} established the following interesting $q$-beta integral
formula using Ramanujan $_1\psi_1$ summation, which  has four parameters $a, b, u. v$
with base $q$, which is now known as the Andrews--Askey integral.
\begin{prop}\label{thmaa}
If  $\max\{|au|, |bu|, |av|, |bv|\}<1$ and $uv\not=0$, then, we have
\[
\int_{u}^v \frac{(qx/u, qx/v; q)_\infty}{(ax, bx; q)_\infty}d_q x
=\frac{(1-q)\Delta(u, v)(abuv; q)_\infty}{(au, bu, av, bv; q)_\infty}.
\]
\end{prop}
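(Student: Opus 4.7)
The plan is to follow the original approach of Andrews and Askey, reducing the claimed formula to Ramanujan's $_1\psi_1$ summation
\[
\sum_{n=-\infty}^{\infty}\frac{(A;q)_n}{(B;q)_n}z^n=\frac{(q,B/A,Az,q/(Az);q)_\infty}{(B,q/A,z,B/(Az);q)_\infty},\qquad|B/A|<|z|<1.
\]
First, applying the Thomae--Jackson definition, the left-hand side equals
\[
(1-q)\sum_{n=0}^{\infty}\bigl[vf(vq^n)-uf(uq^n)\bigr]q^n,\qquad f(x)=\frac{(qx/u,qx/v;q)_\infty}{(ax,bx;q)_\infty}.
\]
Substituting $x=vq^n$ and $x=uq^n$ and applying $(cq^n;q)_\infty=(c;q)_\infty/(c;q)_n$ to every $q$-Pochhammer in $f(vq^n)$ and $f(uq^n)$, each summand splits into a common infinite-product prefactor (independent of $n$) times a ratio of finite $q$-shifted factorials. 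The left-hand side thereby becomes a difference of two basic hypergeometric series in $n$ with base and argument both equal to $q$.

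Next, I would merge these two one-sided sums into a single bilateral $_1\psi_1$ series. The key device is the reflection identity
\[
(A;q)_{-m}=\frac{(-q/A)^m q^{m(m-1)/2}}{(q/A;q)_m},
\]
which rewrites the negative-index terms of a bilateral series as positive-index terms in a mirrored series, together with the sign change produced by the $(B/A)^m$ factor it generates. Comparing term by term, one chooses $A$, $B$, and $z$ as explicit monomials in $a,b,u,v$ so that the $n\ge 0$ half of $\sum_{n\in\mathbb Z}(A;q)_n z^n/(B;q)_n$ reproduces the $v$-boundary contribution while the $n\le -1$ half reproduces minus the $u$-boundary contribution. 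Applying Ramanujan's $_1\psi_1$ summation then evaluates the bilateral sum as a single quotient of infinite products.

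Finally, simplifying the resulting product quotient and recognising that $v(q,u/v,qv/u;q)_\infty=\Delta(u,v)$ produces the claimed right-hand side, in which $(abuv;q)_\infty$ appears in the numerator and $(au,bu,av,bv;q)_\infty$ in the denominator. The main obstacle is the parameter-matching step: one must select $A,B,z$ so that the individual terms of the two one-sided series align precisely with the two halves of the bilateral $_1\psi_1$, which typically requires a preliminary Heine transformation to bring the series into a form compatible with the reflection identity. Once this alignment is verified, the remaining manipulations are routine rearrangements of $q$-shifted factorials, and the convergence condition $\max\{|au|,|bu|,|av|,|bv|\}<1$ is exactly what guarantees the applicability of Ramanujan's summation after the matching.
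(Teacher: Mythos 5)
First, a point of orientation: the paper does not prove Proposition~\ref{thmaa} at all --- it is quoted from Andrews and Askey with the remark that they established it ``using Ramanujan $_1\psi_1$ summation,'' and within the paper's own logic it would instead fall out as the $c=0$ case of the Al--Salam--Verma formula (Proposition~\ref{salverpp}) or of Proposition~\ref{liuqint}. So your plan is the historically correct route rather than the paper's; there is no internal proof to match it against, and it must be judged on its own completeness.

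Judged that way, there is a genuine gap at exactly the step you flag as ``the main obstacle.'' Expanding the $q$-integral and using $(cq^n;q)_\infty=(c;q)_\infty/(c;q)_n$ gives, for the $v$-boundary,
\begin{equation*}
(1-q)\,v\,\frac{(q,qv/u;q)_\infty}{(av,bv;q)_\infty}\sum_{n\ge 0}\frac{(av,bv;q)_n}{(q,qv/u;q)_n}\,q^n ,
\end{equation*}
i.e.\ a ${}_2\phi_1$ with \emph{two} numerator and \emph{two} denominator parameters, and similarly for the $u$-boundary. A half of $\sum_{n\in\mathbb Z}(A;q)_nz^n/(B;q)_n$ has exactly one of each, so no choice of $A,B,z$ can make ``the $n\ge0$ half reproduce the $v$-boundary contribution'' as your proposal asserts; the Heine transformation you describe as ``typically required'' is in fact indispensable, and carrying it out is essentially the whole proof. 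Concretely, applying ${}_2\phi_1(a,b;c;q,z)=\frac{(b,az;q)_\infty}{(c,z;q)_\infty}{}_2\phi_1(c/b,z;az;q,b)$ with $(a,b,c,z)=(av,bv,qv/u,q)$ makes the infinite products collapse and turns the $v$-boundary term into
\begin{equation*}
(1-q)\,v\sum_{k\ge0}\frac{(q/(bu);q)_k}{(av;q)_{k+1}}(bv)^k ,
\end{equation*}
with the analogous expression (swap $u\leftrightarrow v$) for the $u$-boundary; the resulting two-term identity is precisely the $c=0$, $d=b$ case of Theorem~\ref{liuppreciprocity}, which the paper identifies as Ramanujan's $_1\psi_1$ in reciprocal form. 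Even then the two transformed sums are not literally the two halves of one bilateral series --- one of them must still be rewritten via the reflection identity $(A;q)_{-m}=(-q/A)^mq^{m(m-1)/2}/(q/A;q)_m$ (which you state correctly) before the $_1\psi_1$ can be invoked. Until these parameter choices are exhibited and the term-by-term match verified, what you have is an accurate road map, not a proof; alternatively, you could simply set $c=0$ in Proposition~\ref{salverpp} and cite that instead.
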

Subsequently, in $1982$,   Al--Salam and Verma \cite{SalamVerma} found that
Sears' nonterminating extension of the $q$-Saalsch\"{u}tz summation can be
rewritten in the following simple form, see also \cite[page~52 ]{Gas+Rah}.
\begin{prop}\label{salverpp} If  $\max\{|au|, |bu|, |cu|, |av|, |bv|, |cv|\}<1$ and $uv\not=0$,  then, we have
 \[
 \int_{u}^v \frac{(qx/u, qx/v, abcuvx; q)_\infty}{(ax, bx, cx; q)_\infty}d_q x
 =\frac{(1-q)\Delta(u, v)( abuv, acuv, bcuv; q)_\infty}{(au, bu, cu, av, bv, cv; q)_\infty}.
 \]
\end{prop}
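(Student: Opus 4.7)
\emph{Overall plan.} I would prove Proposition~\ref{salverpp} as a one-parameter extension of the Andrews--Askey integral (Proposition~\ref{thmaa}) in the parameter $c$: treat both sides as analytic functions of $c$ near $c=0$, verify they satisfy the same first-order $q$-difference equation in $c$, and match the initial value at $c=0$. Uniqueness then forces equality.

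\emph{Setting up the $q$-recurrence.} Let $F(c)$ and $G(c)$ denote the left- and right-hand sides. Using $(qx;q)_\infty=(x;q)_\infty/(1-x)$, a short calculation on $G$ gives
\[
(1-cu)(1-cv)\,G(c) = (1-acuv)(1-bcuv)\,G(cq).
\]
Applying the same relation to the factors $(abcquvx;q)_\infty$ and $(cqx;q)_\infty$ inside $F(cq)$, the difference $(1-cu)(1-cv)F(c)-(1-acuv)(1-bcuv)F(cq)$ collapses into a single $q$-integral with integrand
\[
H(x) := \frac{P(x)\,(qx/u,qx/v,abcuvx;q)_\infty}{(1-abcuvx)(ax,bx,cx;q)_\infty},
\]
where $P(x) := (1-cu)(1-cv)(1-abcuvx)-(1-acuv)(1-bcuv)(1-cx)$ is linear in $x$.

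\emph{The main step.} The crux is to show $\int_u^v H(x)\,d_q x = 0$. My plan is to exhibit the explicit $q$-antiderivative
\[
\Phi(x) := (1-q)\,cuv \cdot \frac{(x/u,x/v;q)_\infty(abcuvx;q)_\infty}{(ax,bx,cx;q)_\infty},
\]
which satisfies $\Phi(u)=\Phi(v)=0$ thanks to the vanishing factors $(1-x/u)$ and $(1-x/v)$ hidden inside $(x/u;q)_\infty$ and $(x/v;q)_\infty$. A direct computation reduces the identity $D_q\Phi=H$ to the polynomial equality
\[
cuv\bigl[(1-x/u)(1-x/v)(1-abcuvx)-(1-ax)(1-bx)(1-cx)\bigr] = x\,P(x).
\]
The cubic coefficients on each side of the bracket cancel (both equal $-abc$), leaving a degree-two polynomial divisible by $x$; matching the remaining linear and quadratic coefficients against $x\,P(x)$ is a short algebraic check. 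The $q$-analogue of the fundamental theorem then yields $\int_u^v H\,d_q x = \int_u^v D_q\Phi\,d_q x = \Phi(v)-\Phi(u) = 0$.

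\emph{Conclusion.} With the recurrence verified for $F$ and the initial condition $F(0)=G(0)$ supplied by Andrews--Askey (both sides collapse to those of Proposition~\ref{thmaa} when $c=0$), the ratio $F/G$ is invariant under $c\mapsto cq$, so $F(c)/G(c)=F(cq^n)/G(cq^n)\to F(0)/G(0)=1$ as $n\to\infty$ by analyticity at $c=0$, forcing $F\equiv G$ throughout the stated parameter range. An alternative classical route expands $(abcuvx;q)_\infty/(cx;q)_\infty$ via the $q$-binomial theorem and appeals to Sears' nonterminating $q$-Saalsch\"utz summation; this is essentially Al--Salam and Verma's original derivation.
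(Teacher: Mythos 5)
The paper offers no proof of Proposition~\ref{salverpp}: it is simply quoted from Al--Salam and Verma, who observed that it is a rewriting of Sears' nonterminating $q$-Saalsch\"utz summation (the ``alternative classical route'' you mention at the end). Your argument is therefore genuinely different from anything in the paper, and it checks out. The contiguous relation $(1-cu)(1-cv)G(c)=(1-acuv)(1-bcuv)G(cq)$ for the closed form is immediate, and on the integral side the crux --- that $(1-cu)(1-cv)F(c)-(1-acuv)(1-bcuv)F(cq)$ is the $q$-integral of an exact $q$-derivative of a function vanishing at $x=u$ and $x=v$ --- rests on the polynomial identity $cuv\bigl[(1-x/u)(1-x/v)(1-abcuvx)-(1-ax)(1-bx)(1-cx)\bigr]=xP(x)$, which I have verified: the constant and cubic terms of the bracket cancel, and the two remaining coefficients match those of $xP(x)$. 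Iterating the recurrence gives $F(c)=\frac{(acuv,\,bcuv;q)_n}{(cu,\,cv;q)_n}F(cq^n)$, and letting $n\to\infty$, using continuity of $F$ at $c=0$ and the Andrews--Askey evaluation of $F(0)$, yields the result directly; this phrasing also sidesteps dividing by $G$, which can vanish in the degenerate case $u/v\in q^{\mathbb Z}$. Two trivial bookkeeping points: with the Thomae--Jackson normalization used here one has $\int_u^v\mathcal{D}_{q,x}\{\Phi\}\,d_qx=(1-q)\bigl(\Phi(v)-\Phi(u)\bigr)$, and with your $\Phi$ (which already carries a factor $1-q$) the computation gives $\mathcal{D}_{q,x}\{\Phi\}=(1-q)H$ rather than $H$; both constants are harmless since $\Phi(u)=\Phi(v)=0$. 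What your route buys is a self-contained, elementary derivation from the Andrews--Askey integral alone, with no appeal to Sears' summation --- which is pleasant given that the paper later uses Proposition~\ref{salverpp} as the initial-value input in the proof of Theorem~\ref{liuqtrans}.
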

We call this $q$-integral formula the Al--Salam--Verma integral formula.
This $q$-integral formula has five parameters with base $q$.
When $c=0,$ this $q$-integral formula reduces to
the Andrews--Askey integral in Proposition~\ref{thmaa}.

We have extended the Andrews--Askey integral formula or the  Al--Salam--Verma integral formula
to the following $q$-integral formula \cite[Proposition~13.8]{LiuRam2013},
which  has six parameters with base $q$.
Extensions of the Andrews--Askey integral involving the terminating $q$-series
have been discussed by \cite{Wang2008a} and \cite{Cao}.
\begin{prop}\label{liuqint} If $a, b, c, d, u, v, r$ are complex numbers such that \\
$\max\{|au|, |bu|, |cu|, |av|, |bv|, |cv|, |abr/c|\}<1$  and $uv\not=0$, then,  we have
the following $q$-integral formula:
\begin{align*}
\int_{u}^v \frac{(qx/u, qx/v, abrx; q)_\infty}{(ax, bx, cx;q)_\infty} d_q x
&=\frac{(1-q)\Delta(u, v) (acuv, bcuv, abr/c; q)_\infty} {(au, av, bu, bv, cu, cv; q)_\infty}\\
&\quad \times {_3\phi_2}\left({{cu, cv, cuv/r}\atop{acuv, bcuv}}; q, \frac{abr}{c}\right).
\end{align*}
\end{prop}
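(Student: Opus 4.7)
The plan is to unfold the $q$-integral via the Thomae--Jackson definition into a pair of ${_3\phi_2}$ series, and then apply a Sears-type three-term ${_3\phi_2}$ transformation to collapse these into the single ${_3\phi_2}$ on the right-hand side.

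First, writing $g$ for the integrand, the definition of the $q$-integral gives
$$\int_u^v g(x)\, d_q x = (1-q)\sum_{k=0}^\infty q^k\bigl[v\, g(vq^k) - u\, g(uq^k)\bigr].$$
Applying the shift identity $(\alpha q^k; q)_\infty = (\alpha; q)_\infty/(\alpha; q)_k$ to each of the six $q$-Pochhammer factors of $g(wq^k)$, for $w \in \{u,v\}$, each sum over $k$ collapses into a ${_3\phi_2}$ of argument $q$:
$$\sum_{k=0}^\infty q^k g(wq^k) = \frac{(q, wq/\bar w, abrw; q)_\infty}{(aw, bw, cw; q)_\infty}\, {_3\phi_2}\left({{aw, bw, cw}\atop{wq/\bar w,\, abrw}};\, q,\, q\right),$$
where $\bar u = v$ and $\bar v = u$. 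This ${_3\phi_2}$ is Saalsch\"utzian precisely when $r = cuv$, which is exactly the Al--Salam--Verma case of Proposition~\ref{salverpp}.

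For general $r$ the two ${_3\phi_2}$'s are not balanced, so direct summation is unavailable. Instead, I would apply a nonterminating three-term ${_3\phi_2}$ transformation of Sears type (see Gasper--Rahman~\cite{Gas+Rah}, transformations of type (III.34)--(III.36)) to each series, choosing the variant whose "secondary" ${_3\phi_2}$ has upper parameters $cu, cv, cuv/r$, lower parameters $acuv, bcuv$, and argument $abr/c$. The crucial feature is that this secondary series is the \emph{same} for $w = u$ and $w = v$; all $w$-dependence is absorbed into the primary $q$-shifted-factorial prefactors produced by the transformation. Forming the antisymmetric combination $v \cdot [\text{transformed}]_v - u\cdot [\text{transformed}]_u$, the common secondary ${_3\phi_2}$ factors out, and the remaining difference of primary prefactors simplifies, via $\Delta(u,v) = v(q, u/v, qv/u; q)_\infty$ and $(u/v; q)_\infty = (1 - u/v)(qu/v; q)_\infty$, into $\Delta(u,v)$ times the claimed infinite-product factor $(acuv, bcuv, abr/c; q)_\infty / (au, av, bu, bv, cu, cv; q)_\infty$.

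The main obstacle is the second step: identifying the specific Sears-type three-term transformation whose secondary ${_3\phi_2}$ has the right parameter structure and is $u\leftrightarrow v$-symmetric, and then carrying through the bookkeeping that converts the difference of primary prefactors into the antisymmetric theta factor $\Delta(u,v)$. A built-in consistency check is the specialization $r = cuv$: then $cuv/r = 1$ makes the secondary ${_3\phi_2}$ collapse to $1$, and the formula reduces correctly to the Al--Salam--Verma integral of Proposition~\ref{salverpp}.
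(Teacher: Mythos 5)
Your route is genuinely different from the paper's. The paper does not prove Proposition~\ref{liuqint} by series manipulation at all: it quotes it from \cite[Proposition~13.8]{LiuRam2013}, and the method used there and throughout this paper is the $q$-partial-differential-equation method --- one checks that the $q$-integral, viewed as a function of $(a,b)$, is analytic at $(0,0)$ and satisfies $\partial_{q,a}\{f\}=\partial_{q,b}\{f\}$ (this is exactly the direct computation displayed in the proof of Proposition~\ref{liuqpde}), invokes Theorem~\ref{liuqexpansionthm} to expand both sides in the Rogers--Szeg\H{o} polynomials $h_n(a,b|q)$, and then pins down the coefficients by setting $b=0$, where both sides collapse to the Andrews--Askey evaluation. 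Your first step is correct: unfolding the $q$-integral does give $(1-q)\bigl[v\,\tfrac{(q,\,qv/u,\,abrv;q)_\infty}{(av,\,bv,\,cv;q)_\infty}\,{}_3\phi_2(av,bv,cv;qv/u,abrv;q,q)-(u\leftrightarrow v)\bigr]$, your observation that these series are balanced exactly when $r=cuv$ (the Al--Salam--Verma case) is right, and so is the consistency check at $r=cuv$.

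The gap is in the second step, and that step is the whole proof. Since each series has argument $q$ while $de/(abc)=qr/(cuv)$, the two-term Sears transformation does not apply for general $r$; you correctly reach for a three-term relation, but your accounting then only mentions the ``secondary'' series and the ``primary prefactors.'' A three-term relation applied to each of the two series produces a \emph{third} ${}_3\phi_2$ in each case, so the antisymmetric combination a priori contains four series, and the claimed identity asserts that two of them assemble into $\Delta(u,v)(acuv,bcuv,abr/c;q)_\infty/(au,av,bu,bv,cu,cv;q)_\infty$ times the common symmetric series while the other two cancel against each other. That cancellation --- which requires the third series, together with its transformation prefactor and the outer factor $w(q,qw/\bar w,abrw;q)_\infty/(aw,bw,cw;q)_\infty$, to be symmetric under $u\leftrightarrow v$ --- is the real content of the proposition; you neither write down which transformation you mean nor verify the symmetry of the discarded terms, and you explicitly flag this as the unresolved ``main obstacle.'' As it stands the proposal is a plan, not a proof. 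If the transformation bookkeeping proves unwieldy, the paper's own mechanism closes the argument cleanly: verify the $q$-PDE for the integral, expand in $h_n(a,b|q)$, and match coefficients at $b=0$, where the statement reduces to the known Andrews--Askey integral.
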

One of the main results of this paper is to extend the Andrews--Askey integral formula or
the  Al--Salam--Verma integral formula to the following integral formula, which
has seven parameters $a, b, c, d, r, u, v$ with bases $q$.
\begin{thm}\label{liuqtrans}  If $a, b, c, d, u, v, r$ are complex numbers such that $uv\not=0$
and
$\max\{|au|, |bu|, |cu|, |av|, |bv|, |cv|, |abr/c|\}<1,$ then, we have
\begin{align*}
&\int_{u}^v \frac{(qx/u, qx/v, acduvx, abrx; q)_\infty}{(ax, bx, cx, dx; q)_\infty}\\
&\qquad \times{_3\phi_2}\left({{ar, ax, cx}\atop{acduvx, abrx}}; q, bduv\right)d_q x\\
&=\frac{(1-q)\Delta(u, v)(acuv, aduv, bcuv, cduv, abr/c; q)_\infty}
{(au, av, bu, bv, cu, cv, du, dv; q)_\infty}\\
&\qquad \times {_3\phi_2}\left({{cu, cv, cuv/r}\atop{acuv, bcuv}}; q, \frac{abr}{c}\right).
\end{align*}
\end{thm}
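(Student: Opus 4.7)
At $d=0$ both sides of Theorem~\ref{liuqtrans} collapse to the corresponding sides of Proposition~\ref{liuqint}: on the LHS the factor $bduv$ vanishes so the $_3\phi_2$ equals $1$ and $(acduvx;q)_\infty=(dx;q)_\infty=1$, while on the RHS $(aduv,cduv,du,dv;q)_\infty=1$. So Proposition~\ref{liuqint} serves as the base case, and the task is to propagate the identity from $d=0$ to arbitrary $d$.

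My plan is to expand the $_3\phi_2$ on the LHS as
\[
{_3\phi_2}\!\left({{ar,ax,cx}\atop{acduvx,abrx}};q,bduv\right)=\sum_{n=0}^\infty \frac{(ar,ax,cx;q)_n}{(q,acduvx,abrx;q)_n}(bduv)^n,
\]
absorb the finite Pochhammer symbols into the infinite products via $(ax;q)_n/(ax;q)_\infty=1/(axq^n;q)_\infty$ and its analogues for $(cx;q)_n$, $(acduvx;q)_n$, $(abrx;q)_n$, and interchange the sum with the Thomae--Jackson integral (justified by the stated convergence hypotheses) to obtain
\[
\mathrm{LHS}=\sum_{n=0}^\infty \frac{(ar;q)_n(bduv)^n}{(q;q)_n}\int_u^v \frac{(qx/u,qx/v,acduvxq^n,abrxq^n;q)_\infty}{(axq^n,bx,cxq^n,dx;q)_\infty}\,d_q x.
\]
The inner integral has one too many denominator factors for Proposition~\ref{liuqint} to apply directly; I would eliminate the extra factor by a secondary $q$-binomial expansion, for instance $1/(dx;q)_\infty=\sum_k (dx)^k/(q;q)_k$ or Euler's expansion of $(acduvxq^n;q)_\infty$, and then apply Proposition~\ref{liuqint} to the resulting simpler integrals.

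This produces a double series that must be identified with the target
\[
\frac{(acuv,aduv,bcuv,cduv,abr/c;q)_\infty}{(au,av,bu,bv,cu,cv,du,dv;q)_\infty}\cdot {_3\phi_2}\!\left({{cu,cv,cuv/r}\atop{acuv,bcuv}};q,\frac{abr}{c}\right).
\]
The main obstacle is precisely this identification: one must discover the Sears-type $_3\phi_2$ transformation (or apply the nonterminating $q$-Saalsch\"{u}tz summation) whose parameters align with $cu,cv,cuv/r,acuv,bcuv,abr/c$ so as to collapse the $(n,k)$-indexed double sum into the target single $_3\phi_2$. A conceptually cleaner route, truly in the spirit of the paper's advertised method of $q$-partial differential equations, is a uniqueness argument: both sides are analytic in $d$ near $0$, agree there by Proposition~\ref{liuqint}, and should be shown to satisfy the same $q$-difference equation in $d$ — derivable by computing the $q$-shift $d\mapsto qd$ under the integral and in the product representation of the RHS.
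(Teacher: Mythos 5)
Your proposal correctly identifies the base case ($d=0$ recovers Proposition~\ref{liuqint}) and senses that the paper's $q$-PDE machinery should drive the induction, but as written it has two genuine gaps, and the hardest step is explicitly left open. In the first route, after expanding the ${_3\phi_2}$ and absorbing the Pochhammer symbols, the inner integral
$\int_u^v (qx/u,qx/v,acduvxq^n,abrxq^n;q)_\infty/(axq^n,bx,cxq^n,dx;q)_\infty\,d_qx$
is \emph{not} of the form covered by Proposition~\ref{liuqint}: it has four denominator factors and two extra numerator factors, and your proposed secondary expansion of $1/(dx;q)_\infty$ or of $(acduvxq^n;q)_\infty$ introduces free powers $x^k$ into the integrand, which Proposition~\ref{liuqint} also does not handle. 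Even granting that, you concede that collapsing the resulting $(n,k)$-double sum into the target ${_3\phi_2}$ is "the main obstacle" --- but that identification \emph{is} the theorem, so nothing has been proved. The second route (a $q$-difference equation in $d$) is likewise only a wish: no such equation is derived, and it is not clear the $q$-integral satisfies a first-order $q$-shift relation in $d$ of a form that, together with the value at $d=0$, would determine it.

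The paper's actual argument is different in a way worth internalizing. It first applies the Sears ${_3\phi_2}$ transformation to replace the integrand's series by one symmetric in $a$ and $b$ (identity \reff{pliu:eqn1}), then observes that both sides of \reff{pliu:eqn1} are built from the single function $L$ of Proposition~\ref{liuqpde}, which satisfies $\partial_{q,a}\{L\}=\partial_{q,b}\{L\}$. By Theorem~\ref{liuqexpansionthm}, a two-variable function analytic at $(0,0)$ satisfying this $q$-PDE is determined by its expansion in the Rogers--Szeg\H{o} polynomials $h_n(a,b|q)$, hence by its restriction to $b=0$. At $b=0$ both sides reduce to the Al--Salam--Verma integral (Proposition~\ref{salverpp}), so the coefficients agree and the identity follows; the reduction is in $b$, not in $d$. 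If you want to salvage your plan, the missing ingredients are precisely the Sears transformation step and the expansion theorem that converts "agreement on a one-dimensional slice" into "agreement everywhere."
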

When $d=0,$ the $_3\phi_2$ series in the integrand reduces to $1,$ and
in the same time  Theorem~\ref{liuqtrans} becomes Proposition~\ref{liuqint}.
So Theorem~\ref{liuqtrans} is really an extension of Proposition~\ref{liuqint}.

Setting $r=cuv$ in Theorem~\ref{liuqtrans}, the $_3\phi_2$ series on the right-hand side of the equation
in Theorem~\ref{liuqtrans} equals $1$, and we obtain the following proposition.
\begin{prop}\label{csalverliu}
If $a, b, c, d, u, v, r$ are complex numbers such that \\
$\max\{|au|, |bu|, |cu|, |av|, |bv|, |cv|\}<1$ and $uv\not=0$,  then, we have
\begin{align*}
&\int_{u}^v \frac{(qx/u, qx/v, abcuvx, acduvx; q)_\infty}{(ax, bx, cx, dx; q)_\infty}\\
&\qquad \times {_3\phi_2}\left({{acuv, ax, cx}\atop{abcuvx, acduvx}}; q, bduv\right)d_q x\\
&=\frac{(1-q)\Delta(u, v) (abuv, acuv, aduv, bcuv, cduv; q)_\infty}
{(au, av, bu, bv, cu, cv, du, dv; q)_\infty}.
\end{align*}
\end{prop}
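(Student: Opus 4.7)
The plan is to derive Proposition~\ref{csalverliu} as a direct specialization of Theorem~\ref{liuqtrans} by choosing $r=cuv$, so essentially no new analytic work is required beyond verifying that each parameter group collapses correctly and that the convergence hypothesis is preserved.

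First I would substitute $r=cuv$ into the right-hand side of Theorem~\ref{liuqtrans} and examine the ${_3\phi_2}$ factor there. One of its upper parameters is $cuv/r$, which becomes $cuv/(cuv)=1$. Since $(1;q)_n=0$ for every $n\ge1$, only the $n=0$ term of the series survives, and therefore
\[
{_3\phi_2}\!\left({{cu,\,cv,\,1}\atop{acuv,\,bcuv}};\,q,\,\frac{abr}{c}\right)=1.
\]
At the same time, $abr/c = ab\cdot cuv/c = abuv$, so the factor $(abr/c;q)_\infty$ in the numerator becomes $(abuv;q)_\infty$, producing the symmetric $q$-Pochhammer prefactor on the right-hand side of Proposition~\ref{csalverliu}.

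Next I would track the same substitution through the integrand on the left-hand side. The argument of $(\cdot;q)_\infty$ in the numerator, $abrx$, becomes $abcuvx$, matching the claimed form. Inside the ${_3\phi_2}$ in the integrand, the upper parameter $ar$ becomes $a\cdot cuv=acuv$, while the lower parameter $abrx$ becomes $abcuvx$; the remaining parameters $ax,\,cx,\,acduvx$ and the argument $bduv$ are unaffected. Hence the integrand becomes exactly the one displayed in Proposition~\ref{csalverliu}.

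Finally I would verify the hypotheses: the condition $|abr/c|<1$ of Theorem~\ref{liuqtrans} specializes to $|abuv|<1$, but the series on the right terminates to $1$, so this condition is not actually needed for the reduced identity; the remaining bounds $\max\{|au|,|bu|,|cu|,|av|,|bv|,|cv|\}<1$ coincide verbatim with the assumption of Proposition~\ref{csalverliu}. There is no genuine obstacle here: the whole argument reduces to recognizing the vanishing of the ${_3\phi_2}$ tail through the $(1;q)_n$ factor, a standard mechanism for producing closed-form specializations from hypergeometric transformations.
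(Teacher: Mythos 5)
Your proof is correct and is exactly the paper's own derivation: the paper obtains Proposition~\ref{csalverliu} by setting $r=cuv$ in Theorem~\ref{liuqtrans} and observing that the upper parameter $cuv/r=1$ forces $(1;q)_n=0$ for $n\ge 1$, so the right-hand ${_3\phi_2}$ collapses to $1$ while $abr/c$ becomes $abuv$ and $ar$, $abrx$ become $acuv$, $abcuvx$. A marginally cleaner way to dispose of the hypothesis $|abr/c|<1$ is to note that $|abuv|=|au|\,|bv|<1$ already follows from the stated bounds, so the specialized hypotheses are automatically satisfied rather than merely unnecessary.
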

In his lost notebook \cite[p 40]{Ramlost}, Ramanujan stated the following beautiful reciprocity
theorem without proof. This formula may be considered as a three-variable extension of Jacobi's triple product identity.
This result, now known as Ramanujan's reciprocity theorem, was first proved by Andrews
in his paper \cite{Andrews1981} in $1981$. For another proof, see \cite{B+C+Y+Y}.
\begin{thm}If $uv\not=0$ and $av\not=q^{-m}, au\not=q^{-m}$, m=0, 1, 2,\ldots,   then,  we have
 \begin{align*}\label{eq:Ram-Reci}
 \frac{(q, v/u, u/v; q)_\infty} {(au, av; q)_\infty}
 =(1-v/u)\sum_{n=0}^\infty (-1)^n \frac{q^{n(n+1)/2}(v/u)^n}{(av;q)_{n+1}}\\
 \hspace{4 cm}+(1-u/v)\sum_{n=0}^\infty (-1)^n \frac{q^{n(n+1)/2}(u/v)^n}{(au; q)_{n+1}}.
 \end{align*}
 \end{thm}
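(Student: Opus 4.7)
The plan is to prove Ramanujan's reciprocity by combining the two half-sums on the right into a single bilateral series and evaluating it via Ramanujan's $_1\psi_1$ summation formula---essentially the route of Andrews' 1981 proof, which meshes naturally with the $q$-series techniques of this paper.

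First I would normalize each half-sum. Using $(av;q)_{n+1}=(1-av)(avq;q)_n$, extract $(1-v/u)/(1-av)$ from the first sum and $(1-u/v)/(1-au)$ from the second, reducing each to the shape $\sum_{n\ge 0}(-1)^n q^{n(n+1)/2} z^n/(c;q)_n$. Next I would reindex the second sum by $n\mapsto -n-1$, turning it into a sum over $n\le -1$ in the variable $v/u$. The Gaussian exponent is preserved since $q^{(-n-1)(-n)/2}=q^{n(n+1)/2}$, and the alternating sign picks up a single flip from $(-1)^{-n-1}=-(-1)^n$. Applying the $q$-Pochhammer inversion $(\alpha;q)_{-n}=(-q/\alpha)^n q^{-n(n-1)/2}/(q/\alpha;q)_n$ then converts the negative-index symbols to standard form. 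After this bookkeeping, the positive and negative halves should merge into one bilateral series summable by Ramanujan's $_1\psi_1$:
\[
\sum_{n=-\infty}^{\infty}\frac{(A;q)_n}{(B;q)_n}\,z^n = \frac{(q,\,B/A,\,Az,\,q/(Az);q)_\infty}{(B,\,q/A,\,z,\,B/(Az);q)_\infty},
\]
with appropriate parameters $A,B,z$ expressible in terms of $a,u,v$. The resulting infinite product should simplify, via cancellation and the triple product identity, to $(q,v/u,u/v;q)_\infty/\bigl[(au,av;q)_\infty\bigr]$, matching the left side. Analytic continuation then extends the identity from the annular convergence region $|B/A|<|z|<1$ to the full stated domain $au,av\ne q^{-m}$.

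The main obstacle will be the reindexing and matching step: the two half-sums have asymmetric denominators---one involving $av$, the other $au$---so it is not immediate that they fit into a single $(A;q)_n/(B;q)_n$ ratio for all $n\in\mathbb{Z}$. Pinning down the correct $A,B,z$ requires careful tracking of signs, index shifts, and the quadratic $q$-exponent across the reindexing, and may well force a preliminary change of variables so that the common template emerges. Once this dictionary is in place, the $_1\psi_1$ evaluation and the final product simplification are routine symbolic manipulations.
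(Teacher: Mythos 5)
The first thing to note is that the paper itself contains no direct proof of this theorem: it is quoted from the lost notebook with pointers to Andrews and to Berndt--Chan--Yeap--Yee for proofs, and is then recovered as the specialization $c=d=0$ of Theorem~\ref{liuppreciprocity} (which in turn is the $b=0$ case of Theorem~\ref{ramliureci}, proved here by $q$-partial differential equations). So a direct ${}_1\psi_1$ argument would in any case be a different route from the paper's. Unfortunately, the route you propose has a genuine gap at exactly the point you flag as ``the main obstacle,'' and that obstacle is structural, not bookkeeping.

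Concretely: writing the right-hand side as $\frac{u-v}{uv}\,(vS_1-uS_2)$ and reindexing the second sum by $n\mapsto-n-1$ does glue the two halves into a single bilateral series whose $m$-th term is $(-1)^mq^{m(m+1)/2}(v/u)^m/(av;q)_{m+1}$ for $m\ge0$ and $(-1)^mq^{m(m+1)/2}(v/u)^m/(au;q)_{-m}$ for $m\le-1$. But no choice of $A,B,z$ makes this proportional to $(A;q)_m(B;q)_m^{-1}z^m$ for all $m\in\mathbb{Z}$. The Gaussian factor $q^{m(m+1)/2}$ in the positive half can only come from the degenerate limit $A\to\infty$ with $z=qv/(Au)$ (a ${}_1\psi_1$ with finite parameters has only geometric growth in its terms); in that limit $(q/A;q)_k\to1$, so the negative half of any such limiting ${}_1\psi_1$ carries no Pochhammer symbol in its denominator and cannot produce the required $(au;q)_k$. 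Conversely, forcing $(au;q)_k$ into the denominator of the negative half requires $q/A=au$, i.e.\ $A$ finite, which destroys the Gaussian factor in the positive half. Hence a single application of the ${}_1\psi_1$ cannot evaluate the combined series; this incompatibility of the $av$- and $au$-denominators is precisely why the known direct proofs need an extra ingredient (Heine's transformation, the Rogers--Fine identity, or the $q$-Gauss sum) applied to at least one of the unilateral pieces before any bilateral summation. To repair your argument you must supply such a transformation step. Alternatively, within this paper the shortest complete proof is to let $c,d\to0$ in Theorem~\ref{liuppreciprocity}, obtaining $vS_1-uS_2=\Delta(u,v)/(au,av;q)_\infty$, and to rearrange using $(q,v/u,u/v;q)_\infty=(1-v/u)\Delta(u,v)/v$.
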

 Andrews \cite[Theorem~1]{Andrews1981} also derived a four-variable reciprocity theorem
 by using many summation and transformation formulae
for basic hypergeometric series.  Inspired by the work of Andrews, in $2003$, we \cite[Theorem~6]{Liu2003} proved
the following five-variable reciprocity formula by using the $q$-exponential operator to Ramanujan's $_1\psi_1$ summation.
\begin{thm}\label{liuppreciprocity}  For $\max\{|au|, |av|, |cu|, |cv|, |du|, |dv|\}<1$
and $uv\not=0$, then,  we have
\begin{align*}
&v\sum_{n=0}^\infty \frac{(q/du, acuv; q)_n(dv)^n}{(av, cv; q)_{n+1}}
-u\sum_{n=0}^\infty \frac{(q/dv, acuv; q)_{n}(du)^n}{(au, cu; q)_{n+1}}\\
&=\frac{\Delta(u, v)( aduv, acuv, cduv; q)_\infty}{(au, av, cu, cv, du, dv; q)_\infty}.
\end{align*}
\end{thm}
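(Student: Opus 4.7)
The plan is to derive this five-variable reciprocity by applying a $q$-exponential operator of Chen--Liu type successively to the three-variable Ramanujan reciprocity stated immediately above. This matches the approach announced in \cite{Liu2003} cited by the paper and introduces the new parameters $c$ and $d$ cleanly through well-understood actions on $q$-Pochhammer factors. Concretely, I would use the operator $T(yD_q)=\sum_{n\ge 0}(yD_q)^n/(q;q)_n$, where $D_q$ acts on the variable $a$ via $D_q f(a)=(f(a)-f(qa))/a$, together with the classical evaluation $T(yD_q)\{1/(as,at;q)_\infty\}=(ayst;q)_\infty/(as,at,ys,yt;q)_\infty$.

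First I would apply $T(cD_q)$ to both sides of the three-variable Ramanujan reciprocity. On the product side $(q,v/u,u/v;q)_\infty/(au,av;q)_\infty$, the theta factor is inert, and the factor $1/(au,av;q)_\infty$ gets replaced by $(acuv;q)_\infty/(au,av,cu,cv;q)_\infty$. On the two unilateral sums, the operator acts termwise on $1/(av;q)_{n+1}$ and $1/(au;q)_{n+1}$, producing new summands that contain $(acuv;q)_n$ in the numerator and $1/(av,cv;q)_{n+1}$, respectively $1/(au,cu;q)_{n+1}$, in the denominator. This yields an intermediate four-variable reciprocity. I would then apply $T(dD_q)$ to the identity just obtained. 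The product side picks up the extra factor $(aduv,cduv;q)_\infty/(du,dv;q)_\infty$, producing the triple $(aduv,acuv,cduv;q)_\infty$ in the numerator and $(au,av,cu,cv,du,dv;q)_\infty$ in the denominator; combining with $(q,v/u,u/v;q)_\infty$ and using $\Delta(u,v)=v(q,u/v,qv/u;q)_\infty$ gives exactly the right-hand side of the theorem. On the sum side, the second operator application transforms each summand in the desired way, inserting the factors $(q/du;q)_n(dv)^n$ in the first sum and $(q/dv;q)_n(du)^n$ in the second.

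The main obstacle is bookkeeping inside the two infinite sums: one must verify that $T(dD_q)$ applied to terms already carrying $(acuv;q)_n$ and nested $q$-Pochhammer symbols in $a$ collapses, after reindexing, to precisely the summand on the left-hand side, with the correct emergence of $(q/du;q)_n$ and $(q/dv;q)_n$. A cleaner alternative is to write both sides as a bilateral $_2\psi_2$-type series split into positive and negative indices via $(a;q)_{-n-1}=(-1/a)^{n+1}q^{(n+1)(n+2)/2}/(q/a;q)_{n+1}$ and compare with a known bilateral summation; this sidesteps the iterated operator bookkeeping at the cost of identifying the right bilateral evaluation. Throughout, the convergence hypothesis $\max\{|au|,|av|,|cu|,|cv|,|du|,|dv|\}<1$ justifies all interchanges of $T(cD_q)$ and $T(dD_q)$ with the infinite sums.
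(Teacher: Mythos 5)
First, a point of orientation: this paper does not prove Theorem \ref{liuppreciprocity} at all --- it imports it from \cite{Liu2003}, where it is obtained by a \emph{single} application of the $q$-exponential operator to Ramanujan's $_1\psi_1$ summation (which is the $c=0$ case and already carries four of the five variables), not by two successive applications starting from the three-variable reciprocity. That difference matters, because the two steps you propose both break down at the level of the concrete operator computations you assert.

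The fatal gap is your claimed termwise action. Acting on the variable $a$, one has $\mathcal{D}_{q,a}^k\{1/(av;q)_{n+1}\}=v^k(q^{n+1};q)_k/(av;q)_{n+1+k}$, so
\[
T(c\mathcal{D}_{q,a})\left\{\frac{1}{(av;q)_{n+1}}\right\}=\frac{1}{(av;q)_{n+1}}\sum_{k=0}^\infty\frac{(q^{n+1};q)_k(cv)^k}{(q;q)_k(avq^{n+1};q)_k},
\]
a nonterminating series with no product evaluation; it is certainly not $(acuv;q)_n/\bigl((av,cv;q)_{n+1}\bigr)$ up to harmless factors. Indeed it \emph{cannot} be: the left-hand side involves only $a,c,v,q$, whereas the target factor $(acuv;q)_n$ contains $u$, and no operator built from $\mathcal{D}_{q,a}$ can inject $u$ into a function of $av$ alone. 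So the ``intermediate four-variable reciprocity'' never materializes. The product side of your second step fails too: the clean identity $T(y\mathcal{D}_q)\{1/(as,at;q)_\infty\}=(ayst;q)_\infty/(as,at,ys,yt;q)_\infty$ requires that no $a$-dependent infinite product sit in the numerator; once $(acuv;q)_\infty$ is present, $T(d\mathcal{D}_{q,a})$ produces a $_3\phi_2$-valued expression, not $(acuv,aduv,cduv;q)_\infty/(au,av,du,dv;q)_\infty$ --- comparing coefficients of $d^1$ on the two sides already shows a discrepancy by a factor $(1-acuv)$ (and the factor $(cduv;q)_\infty$, being independent of $a$, could not be generated by $\mathcal{D}_{q,a}$ in any case). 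Your fallback via a bilateral $_2\psi_2$ is not a proof either: this reciprocity corresponds to a $_2\psi_2$ \emph{transformation}, not to a known bilateral summation, so ``identifying the right bilateral evaluation'' is precisely the missing content. A workable repair is either the route of \cite{Liu2003} (one operator application to the $_1\psi_1$ using the $_3\phi_2$-valued identity for $T(y\mathcal{D}_q)\{(a\omega;q)_\infty/(as,at;q)_\infty\}$, followed by a Sears transformation), or, staying inside this paper's toolkit, the expansion method of Section 5: show both sides are analytic in $(a,c)$ near the origin, verify $\partial_{q,a}=\partial_{q,c}$ via Proposition \ref{liuqpde}, expand in $h_n(a,c|q)$ by Theorem \ref{liuqexpansionthm}, and reduce to the $_1\psi_1$ at $c=0$.
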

Ramanujan's reciprocity formula  is the special case $c=d=0$ of Theorem~\ref{liuppreciprocity},
and Ramanujan $_1\psi_1$ summation formula is the special case  $c=0$ of Theorem~\ref{liuppreciprocity}.
Setting $a=c=d=0$ in Theorem~\ref{liuppreciprocity} we can  immediately obtain the Jacobi triple
product identity.

On taking $cduv=q$ and $a=0,$ we can obtain the following very interesting Lambert series identity.
\begin{prop} \label{liulambert} If $cu\not=q^{-m}$ and $cv\not=q^{-m}$, m=0, 1, 2, \ldots,  then,  we have the Lambert series
identity
\[
v\sum_{n=0}^\infty \frac{(q/cu)^n}{1-cvq^n}
-u\sum_{n=0}^\infty \frac{(q/cv)^n}{1-cuq^n}
=\frac{v(q, q, u/v, qv/u; q)_\infty}{(cu, cv, q/cu, q/cv; q)_\infty}.
\]
\end{prop}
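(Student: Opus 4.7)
The plan is to obtain Proposition~\ref{liulambert} as a direct specialization of the five-variable reciprocity formula in Theorem~\ref{liuppreciprocity}, by taking $a=0$ together with the parametric constraint $cduv=q$. Schematically, the two free parameters $u,v$ in the Lambert identity survive, $c$ is one of the remaining parameters, and $d$ is fixed by the relation $d=q/(cuv)$.

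First I would set $a=0$ in Theorem~\ref{liuppreciprocity}. Since $(0;q)_n=(0;q)_{n+1}=1$, the factors $(acuv;q)_n$, $(au;q)_{n+1}$, $(av;q)_{n+1}$ on the left and $(aduv;q)_\infty$, $(acuv;q)_\infty$, $(au;q)_\infty$, $(av;q)_\infty$ on the right all collapse to $1$, leaving
\[
v\sum_{n=0}^\infty \frac{(q/du;q)_n (dv)^n}{(cv;q)_{n+1}}
-u\sum_{n=0}^\infty \frac{(q/dv;q)_n (du)^n}{(cu;q)_{n+1}}
=\frac{\Delta(u,v)(cduv;q)_\infty}{(cu,cv,du,dv;q)_\infty}.
\]

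Next I would substitute $d=q/(cuv)$, which yields $du=q/(cv)$, $dv=q/(cu)$, $q/du=cv$, $q/dv=cu$, and $cduv=q$. Under this substitution the shifted factorials in the numerators telescope against the denominators via
\[
\frac{(cv;q)_n}{(cv;q)_{n+1}}=\frac{1}{1-cvq^n},\qquad
\frac{(cu;q)_n}{(cu;q)_{n+1}}=\frac{1}{1-cuq^n},
\]
so the left-hand side becomes exactly the Lambert series of Proposition~\ref{liulambert}. On the right, $(cduv;q)_\infty=(q;q)_\infty$ and $\Delta(u,v)=v(q,u/v,qv/u;q)_\infty$ by definition, producing the closed form $v(q,q,u/v,qv/u;q)_\infty/(cu,cv,q/cu,q/cv;q)_\infty$.

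The one step that needs care, and the only place where I expect a subtlety, is the range of validity. Theorem~\ref{liuppreciprocity} requires $|cu|,|cv|,|du|,|dv|<1$, and after the substitution $d=q/(cuv)$ these collapse to $|q|<|cu|,|cv|<1$. On this restricted region the derivation above is rigorous, and both sides are analytic in $c$ for fixed $u,v$; analytic continuation in $c$ then extends the identity to the full range stated in Proposition~\ref{liulambert}, where one needs only $1-cuq^m\ne 0$ and $1-cvq^m\ne 0$ together with the implicit convergence condition $|q/cu|,|q/cv|<1$ for the Lambert series. Apart from this continuation argument, no genuine obstacle arises: the proof reduces entirely to parameter specialization and algebraic bookkeeping.
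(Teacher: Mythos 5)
Your proposal is correct and is precisely the paper's own route: the paper obtains Proposition~\ref{liulambert} by the very specialization $a=0$, $cduv=q$ in Theorem~\ref{liuppreciprocity}, and your algebraic bookkeeping (including the telescoping $(cv;q)_n/(cv;q)_{n+1}=1/(1-cvq^n)$ and the identification $\Delta(u,v)=v(q,u/v,qv/u;q)_\infty$) matches what the paper leaves implicit. Your added remark on the domain of validity and analytic continuation in $c$ is a sensible supplement to what the paper states without comment.
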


Recently, some other generalizations of Ramanujan's reciprocity formula  have
been found by various authors by rearranging some of the well-known $q$-formulae,
see, for example  \cite{ChuZhang, Kang, Ma}.  In this paper we will give a completely
new extension of Ramanujan's reciprocity formula.

For simplicity, we now introduce the notation $\rho$ in the following definition.
\begin{defn} \label{liudefn}We use the notation $\rho (a, b, c, d, r, u, v)$ to denote the double $q$-series
\begin{align*}
&v\sum_{n=0}^\infty \frac{(q/du, acuv, bcuv; q)_n (dv)^n}{(av, bv, cv; q)_{n+1}}\\
&\quad\times {_3\phi_2}\left({{q^{n+1}, vq^{n+1}/r, q/cu}\atop{avq^{n+1}, bvq^{n+1}}}; q, \frac{abcruv}{q}\right).
\end{align*}
\end{defn}
Our generalization of Ramanujan's reciprocity formula is the following reciprocity formula which
has seven parameters with base $q$.
\begin{thm}\label{ramliureci} If $\rho$ is defined as in Definition~\ref{liudefn} with
$uv\not=0$ and
\[
\max\{|au|, |av|, |bu|, |bv|, |cu|, |cv|, |du|, |dv|, |abr/d|, |abcruv/q|\}<1,
\]
then, we have the following  seven-variable reciprocity formula:
\begin{align*}
&\rho (a, b, c, d, r, u, v)-\rho (a, b, c, d, r, v, u)\\
&=\frac{\Delta(u, v)(acuv, aduv, bcuv, bduv, cduv, abr/d; q)_\infty}
{(au, av, bu, bv, cu, cv, du, dv, abcruv/q; q)_\infty}\\
&\quad\times {_3\phi_2}\left({{du, dv, duv/r}\atop{aduv, bduv}}; q, \frac{abr}{d}\right).
\end{align*}
\end{thm}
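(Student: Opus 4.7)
The plan is to derive Theorem~\ref{ramliureci} directly from Theorem~\ref{liuqtrans} by evaluating the $q$-integral on its left-hand side via the Jackson--Thomae series definition. Writing the integrand of Theorem~\ref{liuqtrans} as $G(x)$, I would start from
\[
\int_u^v G(x)\,d_qx \;=\; (1-q)\sum_{n=0}^\infty\bigl[v\,G(vq^n)-u\,G(uq^n)\bigr]q^n.
\]
Substituting $x=vq^n$ and using $(aq^n;q)_\infty=(a;q)_\infty/(a;q)_n$ to pull out the $n$-independent infinite-product factors, $v\,G(vq^n)q^n$ is rewritten as a common infinite-product prefactor times a ratio of finite $q$-Pochhammer symbols times the surviving inner $_3\phi_2\bigl(ar,avq^n,cvq^n;acduv^2q^n,abrvq^n;q,bduv\bigr)$ from the integrand.

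The central technical step is then to match this $n$-sum with $v\,\rho(a,b,c,d,r,u,v)/(1-q)$ as defined in Definition~\ref{liudefn}. A direct computation shows that both the integrand's $_3\phi_2$ and the target $_3\phi_2\bigl(q^{n+1},vq^{n+1}/r,q/cu;avq^{n+1},bvq^{n+1};q,abcruv/q\bigr)$ appearing in Definition~\ref{liudefn} are Sears-balanced, since one verifies that $bduv=DE/(ABC)$ in the former and $abcruv/q=DE/(ABC)$ in the latter. Consequently the two series are related by a Sears-type $_3\phi_2$ transformation, very likely composed of two Sears three-term identities. Applying the correct chain of transformations reshapes the integrand's $_3\phi_2$ into the target $_3\phi_2$ of Definition~\ref{liudefn} and produces, as by-products, precisely the extra infinite-product factor $(bduv;q)_\infty$ in the numerator and $(abcruv/q;q)_\infty$ in the denominator on the right-hand side of Theorem~\ref{ramliureci}, while simultaneously converting the $_3\phi_2\bigl(cu,cv,cuv/r;acuv,bcuv;q,abr/c\bigr)$ from the right-hand side of Theorem~\ref{liuqtrans} into $_3\phi_2\bigl(du,dv,duv/r;aduv,bduv;q,abr/d\bigr)$.

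After this reorganisation the $v$-indexed sum equals $v\,\rho(a,b,c,d,r,u,v)/(1-q)$ and the $u$-indexed sum equals $u\,\rho(a,b,c,d,r,v,u)/(1-q)$, so that dividing Theorem~\ref{liuqtrans} through by $(1-q)$ yields the claimed seven-variable reciprocity formula. The main obstacle I expect is precisely this Sears-transformation step: the ``$q^{n+1}$'' numerator parameter in Definition~\ref{liudefn} does not arise from any single three-term Sears transformation, so identifying the correct two-step chain and then checking that every infinite-product prefactor cancels or combines as needed will be the delicate part of the calculation. Convergence of all the $q$-series involved is routine under the stated bounds on $|au|,\ldots,|abcruv/q|$.
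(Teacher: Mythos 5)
Your strategy cannot work as stated, because its basic premise is false: the left-hand side of Theorem~\ref{ramliureci} is \emph{not} $(1-q)^{-1}$ times the $q$-integral of Theorem~\ref{liuqtrans}, so no chain of Sears transformations applied to the Jackson--Thomae expansion of that integral can turn it into $\rho(a,b,c,d,r,u,v)-\rho(a,b,c,d,r,v,u)$. The quickest way to see this is to set $a=0$ in both right-hand sides. In Theorem~\ref{liuqtrans} the factors $(acuv,aduv,abr/c;q)_\infty$ and the closing $_3\phi_2$ (whose argument $abr/c$ vanishes) all become $1$, leaving $(1-q)\Delta(u,v)(bcuv,cduv;q)_\infty/(bu,bv,cu,cv,du,dv;q)_\infty$; the same specialization of Theorem~\ref{ramliureci} leaves $\Delta(u,v)(bcuv,bduv,cduv;q)_\infty/(bu,bv,cu,cv,du,dv;q)_\infty$. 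These differ by the factor $(bduv;q)_\infty$, not merely by $(1-q)$. Equivalently, applying Sears once to each closing $_3\phi_2$ reduces the tail of Theorem~\ref{liuqtrans} to $(bcuv;q)_\infty^{-1}\,{}_3\phi_2\left({av,au,cuv/r\atop acuv,abuv};q,br\right)$ and that of Theorem~\ref{ramliureci} to $(abcruv/q;q)_\infty^{-1}\,{}_3\phi_2\left({av,au,duv/r\atop aduv,abuv};q,br\right)$, which are genuinely different functions of $c$ and $d$. There is also a structural obstruction on the left: the $n$-th Jackson term of the integral carries $(av,bv,cv,dv;q)_n$ in its \emph{numerator}, whereas the $n$-th term of $\rho$ carries $(av,bv,cv;q)_{n+1}$ in its \emph{denominator}; a Sears transformation of the inner $_3\phi_2$ leaves the outer summand untouched and cannot bridge this. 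Already at $b=0$ your claim would require a term-by-term identification of the Al--Salam--Verma Jackson sum with the left-hand side of Theorem~\ref{liuppreciprocity}: the totals agree there (up to $1-q$), but the individual terms do not, and for $b\neq 0$ even the totals disagree.

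The paper proves the theorem by an entirely different mechanism. Both sides are rewritten in terms of the function $L$ of Proposition~\ref{liuqpde}; the left-hand side is shown (Lemma~\ref{liurecilem}) to be analytic in $(a,b)$ at the origin, and both sides satisfy the $q$-partial differential equation $\partial_{q,a}\{f\}=\partial_{q,b}\{f\}$, hence expand in the Rogers--Szeg\H{o} polynomials $h_n(a,b|q)$ by Theorem~\ref{liuqexpansionthm}. Setting $b=0$ reduces both sides to the known five-variable reciprocity formula, Theorem~\ref{liuppreciprocity}, which forces all expansion coefficients to agree and yields the identity for small $|a|,|b|$, completed by analytic continuation. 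If you want an integral-free or integral-based alternative, you must start from a statement whose right-hand side actually matches that of Theorem~\ref{ramliureci}; Theorem~\ref{liuqtrans} is not such a statement.
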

Setting $b=0$ in Theorem~\ref{ramliureci}, we immediately obtain Theorem~\ref{liuppreciprocity}.

Noting that when $r=duv,$ the $_3\phi_2$ series reduces to $1$. Thus, on putting $r=duv$ in Theorem~\ref{ramliureci},
we immediately obtain the following beautiful $q$-formula.
\begin{prop}\label{sramliureci} If $\rho$ is defined as in Definition~\ref{liudefn} with
$uv\not=0$ and
\[
\max\{|au|, |av|, |bu|, |bv|, |cu|, |cv|, |du|, |dv|, |abcdu^2v^2/q|\}<1,
\]
then, we have the following  six-variable reciprocity formula:
\begin{align*}
&\rho (a, b, c, d, duv, u, v)-\rho (a, b, c, d, duv, v, u)\\
&=\frac{\Delta(u, v)(abuv, acuv, aduv, bcuv, bduv, cduv; q)_\infty}
{(au, av, bu, bv, cu, cv, du, dv, abcdu^2v^2/q; q)_\infty}.
\end{align*}
\end{prop}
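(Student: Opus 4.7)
The plan is to obtain Proposition \ref{sramliureci} as the direct specialization $r = duv$ of Theorem \ref{ramliureci}. The left-hand side becomes $\rho(a,b,c,d,duv,u,v) - \rho(a,b,c,d,duv,v,u)$ tautologically from the definition of $\rho$, so the entire task is to track the simplifications on the right-hand side.

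First I would examine the ${_3\phi_2}$ series appearing on the right-hand side of Theorem \ref{ramliureci},
\[
{_3\phi_2}\left({{du, dv, duv/r}\atop{aduv, bduv}}; q, \frac{abr}{d}\right),
\]
and observe that with $r = duv$ the third upper parameter $duv/r$ reduces to $1$. Since $(1;q)_n = 0$ for every $n \geq 1$, only the $n = 0$ term survives and the series collapses to $1$. Next I would track the two remaining $r$-dependent infinite products: the factor $(abr/d; q)_\infty$ becomes $(abuv; q)_\infty$, supplying the new numerator factor that distinguishes the proposition from Theorem \ref{ramliureci}, and the denominator factor $(abcruv/q; q)_\infty$ becomes $(abcdu^2v^2/q; q)_\infty$, matching the stated formula.

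Finally I would verify that the convergence hypothesis of Theorem \ref{ramliureci} is implied by that of the proposition. The condition $|abr/d| < 1$ becomes $|abuv| < 1$, which follows from $|au| < 1$ and $|bv| < 1$; the condition $|abcruv/q| < 1$ becomes the stated $|abcdu^2v^2/q| < 1$; and the remaining eight bounds $|au|,\ldots,|dv| < 1$ appear verbatim in both hypotheses. Since the whole argument reduces to a single substitution plus a one-line observation about $(1;q)_n$, there is no serious obstacle — all of the analytic content lives in Theorem \ref{ramliureci}, and Proposition \ref{sramliureci} is simply the most symmetric packaging of its $r = duv$ specialization.
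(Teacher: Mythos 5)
Your proposal is correct and is exactly the paper's own derivation: the paper obtains Proposition~\ref{sramliureci} by setting $r=duv$ in Theorem~\ref{ramliureci}, noting that the upper parameter $duv/r=1$ kills all terms of the ${_3\phi_2}$ beyond $n=0$, and simplifying the products $(abr/d;q)_\infty$ and $(abcruv/q;q)_\infty$ accordingly. Your additional check that the hypotheses of the theorem specialize to those of the proposition is a sensible (if routine) completion of the same argument.
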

The remainder of this paper is organized as follows. Some inequalities for $q$-series are discussed in Section~2.
In Section~3, we introduce some important facts in $q$-differential calculus. Sections~4 and 5 are devoted to the proofs
of Theorems~\ref{liuqtrans} and \ref{ramliureci}. In Section~6, we will use  Theorems~\ref{liuqtrans} to
derive a beta integral formula  which including the Askey--Wilson integral as a special case.
Some limiting cases of Theorem~\ref{liuppreciprocity} are discussed in Section~7, and
one notable example is the following formula:
\[
\frac{(q; q)_\infty^4}{(qa, q/a; q)_\infty^2}
=1+(1-a)^2 \sum_{n=1}^\infty \frac{n(q/a)^n}{1-aq^n}
+(1-1/a)^2 \sum_{n=1}^\infty \frac{n (qa)^n}{1-q^n/a}.
\]
\section{some inequalities for $q$-series}
\noindent
For convenience, in this section,  Sections 3 and 4,   we assume  that $0<q<1.$

\begin{prop}\label{liuequality} If $k$ is a nonnegative integer or $\infty$, $a$ and $b$ are two nonnegative  numbers
such that $0 \le b \le 1, $ then, we have
\[
(-ab; q)_k \le (-a; q)_\infty.
\]
If we further assume that $0\le a \le 1$,  then, we have
\[
(ab; q)_k \ge (a; q)_\infty.
\]

\end{prop}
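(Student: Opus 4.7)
The proof will be a short termwise comparison combined with monotonicity of the infinite product.

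For the first inequality, I would start by writing out the finite product
\[
(-ab;q)_k=\prod_{j=0}^{k-1}(1+abq^{j}),
\]
and observe that because $a\ge 0$, $0\le b\le 1$ and $0<q<1$, each factor satisfies
$1\le 1+abq^{j}\le 1+aq^{j}$. Taking the product over $j=0,\ldots,k-1$ gives
\[
(-ab;q)_k\le\prod_{j=0}^{k-1}(1+aq^{j}).
\]
Since every factor $1+aq^{j}$ is at least $1$, appending the missing tail $\prod_{j\ge k}(1+aq^{j})$ only enlarges the product, so the right side is bounded by $(-a;q)_\infty$. This handles both finite $k$ and $k=\infty$ uniformly.

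For the second inequality, under the extra assumption $0\le a\le 1$ I would again compare termwise:
$0\le ab q^{j}\le aq^{j}\le 1$, so
\[
0\le 1-aq^{j}\le 1-abq^{j}\le 1.
\]
Multiplying the left inequality over $j=0,\ldots,k-1$ yields
\[
(ab;q)_k=\prod_{j=0}^{k-1}(1-abq^{j})\ge\prod_{j=0}^{k-1}(1-aq^{j}).
\]
Now every factor $1-aq^{j}$ lies in $[0,1]$, so multiplying by the additional nonnegative tail factors can only decrease the product, giving
\[
\prod_{j=0}^{k-1}(1-aq^{j})\ge\prod_{j=0}^{\infty}(1-aq^{j})=(a;q)_\infty,
\]
which completes the chain.

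There is no real obstacle here; the only care needed is in the direction of the monotonicity step, which flips between the two cases because $(-a;q)_\infty$ has factors $\ge 1$ while $(a;q)_\infty$ has factors in $[0,1]$. I would also note at the outset the degenerate case $k=0$, where $(-ab;q)_0=(ab;q)_0=1$ and both inequalities are immediate from $(-a;q)_\infty\ge 1\ge(a;q)_\infty$ under the stated hypotheses on $a$.
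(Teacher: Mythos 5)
Your proof is correct and follows essentially the same route as the paper: a termwise comparison $1+abq^j\le 1+aq^j$ (resp. $1-abq^j\ge 1-aq^j\ge 0$) over the finite range, followed by appending the infinite tail, whose factors are $\ge 1$ in the first case and in $[0,1]$ in the second. Nothing further is needed.
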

\begin{proof} Keeping the fact $0<q<1$ in mind, we find that for any $0\le j\le k-1$,
\[
1+abq^j \le 1+aq^j.
\]
On multiplying these inequalities together, we deduce that
\[
(-ab; q)_k \le (-a; q)_k.
\]
Since $(-aq^k; q)_\infty\ge 1,$ we multiply $(-aq^k; q)_\infty$ to the right-hand side of the above inequality
to arrive at the first inequality in the proposition. In the same way we can prove the second inequality.
This completes the proof of Proposition~\ref{liuequality}.
\end{proof}
\begin{prop}\label{convergenceseries} If $\max\{|b_1|, |b_2|, \ldots, |b_r|, |x|\}<1$ and $n$ is a nonnegative integer,
then, we have
\[
\left|{_{r+1}\phi_r}\left({{a, a_1q^n, \ldots, a_r q^n}\atop{b_1q^n, \ldots b_r q^n}}; q, x\right)\right|
\le \frac{(-|ax|, -|a_1|, \ldots, -|a_r|; q)_\infty}{(|x|, |b_1|, \ldots, |b_r|; q)_\infty}.
\]
\end{prop}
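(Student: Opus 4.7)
The plan is to estimate the $_{r+1}\phi_r$ series term by term and then invoke the $q$-binomial theorem. Writing out the definition and noting that $1+s-r=0$ here, so no extra $q^{k(k-1)/2}$ factor appears, the series is
\[
{_{r+1}\phi_r}\left({{a, a_1 q^n, \ldots, a_r q^n}\atop{b_1 q^n, \ldots, b_r q^n}}; q, x\right)
= \sum_{k=0}^\infty \frac{(a;q)_k (a_1 q^n, \ldots, a_r q^n;q)_k}{(q;q)_k (b_1 q^n, \ldots, b_r q^n;q)_k} x^k.
\]
Applying the triangle inequality moves us to a series of nonnegative terms, and the problem reduces to bounding each factor by a suitable $k$-independent quantity.

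For each $i$ and each $j$, the elementary estimates $|1-cq^j|\le 1+|c|q^j$ and $|1-cq^j|\ge 1-|c|q^j$ (the latter valid because $|b_i|q^{n+j}<1$) give
\[
|(a_i q^n;q)_k|\le (-|a_i|q^n;q)_k,\qquad |(b_i q^n;q)_k|\ge (|b_i|q^n;q)_k.
\]
Since $0\le q^n\le 1$ and $0\le |b_i|<1$, Proposition~\ref{liuequality} (applied with $b=q^n$) then upgrades these into the $k$-independent bounds $(-|a_i|q^n;q)_k\le (-|a_i|;q)_\infty$ and $(|b_i|q^n;q)_k\ge (|b_i|;q)_\infty$, which can be pulled out of the sum as the constant factor $\prod_{i=1}^r (-|a_i|;q)_\infty/(|b_i|;q)_\infty$.

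The remaining sum involves only the $(a;q)_k$ factor. Using $|1-aq^j|\le 1+|a|q^j$ once more and then the classical $q$-binomial theorem (valid since $|x|<1$),
\[
\sum_{k=0}^\infty \frac{|(a;q)_k|}{(q;q)_k}|x|^k
\le \sum_{k=0}^\infty \frac{(-|a|;q)_k}{(q;q)_k}|x|^k
= \frac{(-|a||x|;q)_\infty}{(|x|;q)_\infty}=\frac{(-|ax|;q)_\infty}{(|x|;q)_\infty},
\]
and multiplying the three estimates together yields the claimed inequality. There is no substantive obstacle: the only subtlety worth emphasizing is that, although $a$ is not assumed bounded, the bound on $(a;q)_k$ combines with $|x|^k$ via the $q$-binomial theorem to produce exactly the factor $(-|ax|;q)_\infty$, which is why $|a|$ need not appear separately in the hypothesis.
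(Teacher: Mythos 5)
Your proposal is correct and follows essentially the same route as the paper: bound each factor $(a_iq^n;q)_k$ and $(b_iq^n;q)_k$ via the elementary inequalities together with Proposition~\ref{liuequality}, pull the resulting $k$-independent constants out of the sum, and finish by applying the $q$-binomial theorem to $\sum_k \frac{(-|a|;q)_k}{(q;q)_k}|x|^k$ to produce the factor $(-|ax|;q)_\infty/(|x|;q)_\infty$. The only difference is cosmetic: you invoke Proposition~\ref{liuequality} with $b=q^n$ explicitly, while the paper absorbs $q^n\le 1$ directly into the product estimates.
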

\begin{proof}Keeping $0<q<1$ in mind, using the triangle inequality and Proposition~\ref{liuequality},
we find that for $j\in\{1, 2, \ldots, r\}$,
\begin{equation*}
|(a_jq^n; q)_k|\le \prod_{l=0}^{k-1}(|1+|a_j|q^l)\le \prod_{l=0}^{\infty}(|1+|a_j|q^l)
=(-|a_j|; q)_\infty,
\end{equation*}
and
\begin{equation*}
|(b_jq^n; q)_k|\ge \prod_{l=0}^{k-1}(|1-|b_j|q^l)\ge \prod_{l=0}^{\infty}(|1-|b_j|q^l)
=(|b_j|; q)_\infty.
\end{equation*}
It follows that
\[
\left| \frac{(a, a_1q^n, \ldots, a_r q^n; q )_k x^k}{(q, b_1q^n, \ldots b_r q^n; q)_k} \right|\le
\frac{( -|a_1|, \ldots, -|a_r|)_\infty (-|a|; q)_k |x|^k}{(|b_1|, \ldots, |b_r|)_\infty (q; q)_k}.
\]
Using this inequality and  the triangle inequality, we conclude that
\begin{align*}
&\left|{_{r+1}\phi_r}\left({{a, a_1q^n, \ldots, a_r q^n}\atop{b_1q^n, \ldots b_r q^n}}; q, x\right)\right|\\
&\le \frac{( -|a_1|, \ldots, -|a_r|)_\infty }{(|b_1|, \ldots, |b_r|)_\infty }
\sum_{k=0}^\infty \frac{(-|a|; q)_k |x|^k}{(q; q)_k}.
\end{align*}
On applying the $q$-binomial theorem to the right-hand side of the above inequality, we complete
the proof of Proposition~\ref{convergenceseries}.
\end{proof}
It should be pointed out that Wang \cite[Theorem~1.1]{Wang} has obtained a similar inequality.
\section{some facts in $q$-differential calculus}
Next we introduce some basic concepts in $q$-differential calculus.
\begin{defn}\label{qderivative}
For any function $f(x)$ of one variable, the  $q$-derivative of $f(x)$
with respect to $x,$ is defined as
\begin{equation*}
\mathcal{D}_{q,x}\{f(x)\}=\frac{f(x)-f(qx)}{x},
\end{equation*}
and we further define  $\mathcal{D}_{q,x}^{0} \{f\}=f$~ and~ $\mathcal{D}_{q, x}^n \{f\}=\mathcal{D}_{q, x}\{\mathcal{D}_{q, x}^{n-1}\{f\}\}.$
\end{defn}
The $q$-derivative was first introduced by L.  Schendel \cite{Schnendel} in 1877 and then by
F. H. Jackson \cite{Jackson1908} in 1908, which is a $q$-analog of the ordinary derivative.
The definition of the $q$-partial derivative can be found in \cite{LiuRam2013}.
\begin{defn}\label{qpdfn}
A $q$-partial derivative of a function of several variables is its $q$-derivative with respect to one of those variables, regarding other variables as constants. The $q$-partial derivative of a function $f$ with respect to the variable $x$ is denoted by $\partial_{q, x}\{f\}$.
\end{defn}

\begin{defn}\label{qpde}
A $q$-partial differential equation is an equation that contains unknown multivariable functions and their $q$-partial derivatives.
\end{defn}
The  homogeneous Rogers--Szeg\H{o} polynomials play an important role in the theory of orthogonal
polynomials, which are defined by \cite{Liu2010, LiuRam2013}
\begin{equation}
h_n (a, b| q)=\sum_{k=0}^n  {n \choose k}_{q}a^kb^{n-k}.
\label{rseqn1}
\end{equation}
By multiplying two copies of the $q$-binomial theorem (see, for example \cite[p. 8, Eq. (1.3.2)]{Gas+Rah}),
one can find that \cite{Liu2010, LiuRam2013}
\begin{equation}
\sum_{n=0}^\infty h_n (a, b | q) \frac{t^n}{(q; q)_n}=\frac{1}{(at, bt; q)_\infty},  \quad \ |at|<1, |bt|<1.
\label{rseqn2}
\end{equation}

It turn out that the $q$-partial differential equations is an important subject of study,
we started the study of this subject in \cite{LiuRam2013} and \cite{LiuRam2014}.
The following very useful expansion theorem for $q$-series can be found in \cite[Proposition~1.6]{LiuRam2013}.

\begin{thm}\label{liuqexpansionthm} If $f(x,y)$  is a two-variable
 analytic function at $(0,0)\in \mathbb{C}^2$, then, $f$ can be expanded
 in terms of $h_n(x, y|q)$ if and only if $f$
 satisfies the $q$-partial differential equation
$
\partial_{q, x}\{f\}=\partial_{q, y}\{f\}.
$
\end{thm}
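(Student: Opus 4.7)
The plan is to prove both implications directly from the defining series \reff{rseqn1} of the Rogers--Szeg\H{o} polynomials.

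For the ``if'' direction, the essential observation is that each $h_n(x,y|q)$ itself satisfies the stated $q$-partial differential equation. Applying $\mathcal{D}_{q,x}\{x^k\} = (1-q^k) x^{k-1}$ term-by-term in \reff{rseqn1}, reindexing, and using the easily checked identity ${n \choose k+1}_q (1-q^{k+1}) = {n \choose k}_q (1-q^{n-k})$ gives
\[
\partial_{q,x} h_n(x,y|q) = (1-q^n)\, h_{n-1}(x,y|q),
\]
and the symmetric computation in the second variable produces the same right-hand side. Hence $\partial_{q,x} h_n = \partial_{q,y} h_n$, and $\mathbb{C}$-linearity of the two $q$-partial derivative operators transfers the equation to any convergent expansion $f(x,y) = \sum_{n\ge 0} c_n h_n(x,y|q)$.

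For the converse, I would expand the analytic $f$ as a double power series $f(x,y)=\sum_{m,n\ge 0} a_{m,n} x^m y^n$ valid in some polydisc about the origin. Applying $\partial_{q,x}$ and $\partial_{q,y}$ termwise (justified by absolute convergence) and matching the coefficient of $x^p y^q$ on each side of $\partial_{q,x}f=\partial_{q,y}f$ yields the recurrence
\[
(1-q^{p+1})\, a_{p+1,q} = (1-q^{q+1})\, a_{p, q+1}, \qquad p, q \ge 0.
\]
Iterating this recurrence $p$ times to push the first index down to $0$ produces the closed form $a_{p, q} = a_{0, p+q}\, {p+q \choose p}_q$, because the telescoping product $\prod_{j=1}^p \frac{1-q^{q+j}}{1-q^j}$ is exactly ${p+q \choose p}_q$. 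Setting $c_n := a_{0, n}$ and regrouping the double sum by the total degree $n = p+q$ then identifies $f(x,y)$ with $\sum_{n\ge 0} c_n h_n(x,y|q)$.

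The only genuine technicality is justifying the termwise $q$-differentiation and the regrouping of the double series, both of which follow from the absolute and uniform convergence of the power series of $f$ on compact subsets of a bidisc about the origin. A short concluding remark, appealing to the generating function \reff{rseqn2}, should then verify that the reconstructed series $\sum_n c_n h_n(x,y|q)$ converges on the same bidisc, so the expansion is a genuine identity of analytic functions rather than only a formal one.
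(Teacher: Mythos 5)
Your proof is correct, and it is essentially the argument behind the cited result \cite[Proposition~1.6]{LiuRam2013}, which the paper invokes without reproving: expand $f$ as a double power series, translate the $q$-partial differential equation into the coefficient recurrence $(1-q^{p+1})a_{p+1,s}=(1-q^{s+1})a_{p,s+1}$, solve it to get $a_{p,s}=\binom{p+s}{p}_q a_{0,p+s}$, and regroup by total degree; the converse via $\partial_{q,x}h_n=\partial_{q,y}h_n=(1-q^n)h_{n-1}$ is also the standard one. The convergence technicalities you flag are genuinely routine here because $\mathcal{D}_{q,x}$ is a difference quotient, so termwise application needs only absolute convergence at $(x,y)$ and $(qx,y)$.
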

One of the most important formulae for the Rogers--Szeg\H{o} polynomials is the
following $q$-Mehler formula, which can be derived easily from Theorem~\ref{liuqexpansionthm},
see \cite[pp.219--220]{LiuRam2013} for details.
\begin{prop}\label{qmehler} For $\max\{|asz|, |atz|, |bsz|, |btz|\}<1,$ we have
\begin{align*}
\sum_{n=0}^\infty h_{n}(a, b|q)h_n(s, t|q) \frac{z^n}{(q; q)_n}
=\frac{(abstz^2; q)_\infty}{(asz, atz, bsz, btz; q)_\infty}.
\end{align*}
\end{prop}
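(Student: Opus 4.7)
The plan is to invoke the $q$-expansion theorem (Theorem~\ref{liuqexpansionthm}) applied to the right-hand side of the proposed identity, treating it as a function of two variables $a,b$ with $s,t,z$ held as parameters. Set
\[
F(a,b) := \frac{(abstz^{2};q)_{\infty}}{(asz,atz,bsz,btz;q)_{\infty}}.
\]
Under the stated convergence conditions, $F$ is analytic in $(a,b)$ in a neighborhood of $(0,0)$, since the denominator is a finite product of factors none of which vanishes at the origin.

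The first and main substantive step is to verify that $F$ satisfies the $q$-partial differential equation $\partial_{q,a}\{F\}=\partial_{q,b}\{F\}$. I would compute the ratio $F(a,b)/F(qa,b)$ using the telescoping identity $(x;q)_{\infty}=(1-x)(qx;q)_{\infty}$, which collapses to the clean expression $(1-abstz^{2})/[(1-asz)(1-atz)]$. A short algebraic simplification of $F(a,b)-F(qa,b)$ then yields
\[
\partial_{q,a}\{F\}=F(a,b)\cdot\frac{sz(1-atz)+tz(1-bsz)}{1-abstz^{2}}.
\]
By the symmetric computation with $a$ and $b$ interchanged one obtains a formula for $\partial_{q,b}\{F\}$, and the two numerators coincide after expanding, since both simplify to $sz+tz-astz^{2}-bstz^{2}$. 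This confirms the $q$-partial differential equation.

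Applying Theorem~\ref{liuqexpansionthm} then gives an expansion $F(a,b)=\sum_{n=0}^{\infty}\mu_{n}(s,t,z)\,h_{n}(a,b\mid q)$ for uniquely determined coefficients $\mu_{n}$. To identify these coefficients I would specialize $b=0$. On the one hand, $h_{n}(a,0\mid q)=a^{n}$, so the expansion becomes $F(a,0)=\sum_{n}\mu_{n}a^{n}$. On the other hand,
\[
F(a,0)=\frac{1}{(asz,atz;q)_{\infty}}=\sum_{n=0}^{\infty}h_{n}(s,t\mid q)\,\frac{(az)^{n}}{(q;q)_{n}}
\]
by the generating function (\ref{rseqn2}) with the substitution $t\mapsto az$. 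Comparing coefficients of $a^{n}$ yields $\mu_{n}=h_{n}(s,t\mid q)\,z^{n}/(q;q)_{n}$, and substituting back produces exactly the claimed $q$-Mehler formula.

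The only nontrivial part is the verification of the $q$-partial differential equation; the rest is bookkeeping. I expect that obstacle to dissolve once the ratio $F(a,b)/F(qa,b)$ is written down and both sides of the desired equation are reduced to the common expression $sz+tz-(a+b)stz^{2}$ divided by $1-abstz^{2}$.
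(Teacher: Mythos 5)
Your proposal is correct and follows essentially the same route the paper indicates: verifying the $q$-partial differential equation $\partial_{q,a}\{F\}=\partial_{q,b}\{F\}$ for the right-hand side, invoking Theorem~\ref{liuqexpansionthm}, and identifying the coefficients by setting $b=0$ and using the generating function (\ref{rseqn2}), which is exactly the derivation the paper attributes to \cite[pp.~219--220]{LiuRam2013}. Your algebra for the $q$-derivative ratio and the common numerator $sz+tz-(a+b)stz^{2}$ checks out, so no gap remains beyond the routine analytic continuation to the full stated domain.
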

In order to prove Theorems~\ref{liuqtrans} and \ref{ramliureci}, we need  the
following proposition.
\begin{prop}\label{liuqpde}
The function $L(a, b, u, v, s, t)$ satisfies the $q$-partial differential equation
$\partial_{q, a}\{L\}=\partial_{q, b}\{L\},$ where $L(a, b, u, v, s, t)$
is defined by
\[
\frac{(av, bv, abstu/v; q)_\infty}{(as, at, au, bs, bt, bu; q)_\infty}
{_3\phi_2}\left({{v/s, v/t, v/u}\atop{av, bv}}; q, \frac{abstu}{v}\right).
\]
\end{prop}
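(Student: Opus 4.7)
The strategy is to verify the $q$-partial differential equation $\partial_{q,a}\{L\}=\partial_{q,b}\{L\}$ by appealing to Theorem~\ref{liuqexpansionthm}: it suffices to exhibit $L$ as an expansion $L=\sum_{n\ge 0}h_n(a,b|q)\,\Phi_n(u,v,s,t)$ with coefficients free of $a$ and $b$.

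As a first step I would apply Sears's ${_3\phi_2}$ transformation to the ${_3\phi_2}$ factor in $L$, taking $v/u$ as the distinguished top parameter. A direct calculation (with $de/abc=abstu/v$) gives
\[
{_3\phi_2}\!\left({{v/s,v/t,v/u}\atop{av,bv}};q,\tfrac{abstu}{v}\right)
= \frac{(bu,abst;q)_\infty}{(bv,abstu/v;q)_\infty}\,
{_3\phi_2}\!\left({{v/u,as,at}\atop{av,abst}};q,bu\right),
\]
which, after cancellation with the prefactor, recasts $L$ in the cleaner form
\[
L=\frac{(av,abst;q)_\infty}{(as,at,au,bs,bt;q)_\infty}\,
{_3\phi_2}\!\left({{v/u,as,at}\atop{av,abst}};q,bu\right).
\]
The virtue of this form is that the $(a,b)$-symmetric block $(abst;q)_\infty/(as,at,bs,bt;q)_\infty$ is precisely the right-hand side of the $q$-Mehler formula (Proposition~\ref{qmehler}) at $z=1$.

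Next I would expand the ${_3\phi_2}$ as a series in $k\ge 0$, apply $q$-Mehler with $a$ replaced by $aq^k$ to the symmetric block $(abstq^k;q)_\infty/(asq^k,atq^k,bs,bt;q)_\infty$, interchange summations, and invoke the convolution identity
\[
h_m(aq^k,b|q)=\sum_{i=0}^{\min(k,m)}\binom{k}{i}_q\binom{m}{i}_q(q;q)_i(-a)^i q^{\binom{i}{2}}h_{m-i}(a,b|q),
\]
which follows from the Rogers--Szeg\H{o} generating function \reff{rseqn2} and the $q$-binomial theorem applied to $(at;q)_k$. Reindexing $m\mapsto n+i$ and performing the inner $k$-summation (which collapses to a ${_2\phi_1}$ evaluable either by the $q$-binomial theorem or by a second Sears step) should collect everything into a single sum $L=\sum_n h_n(a,b|q)\Phi_n$ with $a,b$-free coefficients, and Theorem~\ref{liuqexpansionthm} then delivers the claimed $q$-partial differential equation.

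The principal obstacle will be verifying that the residual $a$-dependence in the coefficient of $h_n(a,b|q)$ really does cancel. This cancellation is not at all transparent from the derivation: it rests on a delicate compatibility between the $(-a)^i q^{\binom{i}{2}}$ weights coming from the $h$-polynomial convolution, the surviving $(avq^k;q)_\infty/(au;q)_\infty$ prefactor, and the specific ${_2\phi_1}$ that emerges once the $k$-series is extracted. Executing this cancellation cleanly---most naturally through a further ${_2\phi_1}$ transformation or an alternative application of Sears---is the technical core of the argument.
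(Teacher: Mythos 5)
There is a genuine gap here, and it sits exactly where the mathematical content of the proposition lies. Your plan is to invoke the ``if'' direction of Theorem~\ref{liuqexpansionthm} by exhibiting $L=\sum_n h_n(a,b|q)\Phi_n$ with $\Phi_n$ free of $a$ and $b$; but since that theorem makes expandability \emph{equivalent} to the $q$-partial differential equation, producing such an expansion is not a reduction of the problem --- it \emph{is} the problem. You then stop short of producing it: after the Sears step the $k$-th term of your series is
\[
\frac{(v/u;q)_k\,(bu)^k}{(q;q)_k}\cdot\frac{(avq^k;q)_\infty}{(au;q)_\infty}\cdot\frac{(abstq^k;q)_\infty}{(asq^k,atq^k,bs,bt;q)_\infty},
\]
and the factors $(bu)^k$, $(avq^k;q)_\infty$ and $1/(au;q)_\infty$ are asymmetric in $a$ and $b$ and are not absorbed by the $q$-Mehler block. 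Your closing paragraph concedes that the cancellation of this residual dependence ``is not at all transparent'' and is ``the technical core of the argument''; as written, the proposition is therefore not proved. (Your choice of $v/u$ as the distinguished Sears parameter is what forces this: it isolates $au$ and $bu$ on opposite sides of the transformation, so no single application of Proposition~\ref{qmehler} can symmetrize the term.)

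The paper's proof avoids all of this by a different decomposition. Using Proposition~\ref{liuqint} one can write
\[
L(a, b, u, v, s, t)=\frac{(v/s, v/t; q)_\infty}{(1-q)\Delta(s, t)\, (au, bu; q)_\infty}
\int_{s}^{t} \frac{(qx/s, qx/t, abux; q)_\infty}{(ax, bx, vx/st;q)_\infty}\, d_q x,
\]
so that each term of the defining series (with $x=sq^n$ or $tq^n$) carries its entire $(a,b)$-dependence in the single block $\dfrac{(abux;q)_\infty}{(au,bu,ax,bx;q)_\infty}$. This block is the $z=1$ case of the $q$-Mehler kernel of Proposition~\ref{qmehler} with $(s,t)$ replaced by $(u,x)$, hence is already an $h_n(a,b|q)$-expansion with $a,b$-free coefficients; equivalently, a one-line computation gives
\[
\partial_{q,a}\left\{\frac{(abux;q)_\infty}{(au,bu,ax,bx;q)_\infty}\right\}
=\frac{(u+x-aux-bux)\,(abuxq;q)_\infty}{(au,bu,ax,bx;q)_\infty},
\]
which is visibly symmetric in $a$ and $b$. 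Linearity (term by term over the $q$-integral) then yields $\partial_{q,a}\{L\}=\partial_{q,b}\{L\}$ with no convolution identities and no residual cancellation to check. If you want to salvage your route, you would need to choose the transformation so that each summand lands in this Mehler form; otherwise the argument as proposed does not close.
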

\begin{proof}  It is easily seen that  using $L(a, b, u, v, s, t)$ we can rewrite
the formula in Proposition~\ref{liuqint} in the form
\[
L(a, b, u, v, s, t)=\frac{(v/s, v/t; q)_\infty}{(1-q)\Delta(s, t) (au, bu; q)_\infty}
\int_{s}^t \frac{(qx/s, qx/t, abux; q)_\infty}{(ax, bx, vx/st;q)_\infty} d_q x.
\]
Noting the definition of the $q$-partial differential equations and using a direct computation, we easily find that
\begin{align*}
&\partial_{q, a}\{L\}=\partial_{q, b}\{L\}\\
&=\frac{(v/s, v/t; q)_\infty}{(1-q)\Delta(s, t)}
\int_{s}^t \frac{\left(x+u-aux-bux\right)(qx/s, qx/t, abuxq; q)_\infty}{(au, bu, ax, bx, vx/st;q)_\infty} d_q x,
\end{align*}
which indicates that Proposition~\ref{liuqpde} holds.
\end{proof}
\section{the proof of Theorem~\ref{liuqtrans}}
Recall the Sears $_3\phi_2$ transformation formula (see, for example \cite[Theorem~3]{Liu2003})
\begin{align*}
&{_3\phi_2}\left({{a_1, a_2, a_3}\atop{b_1, b_2}}; q,  \frac{b_1b_2}{a_1a_2a_3}\right)\\
&=\frac{(b_2/a_3, b_1b_2/a_1a_2; q)_\infty}{(b_2, b_1b_2/a_1a_2a_3; q)_\infty}
{_3\phi_2}\left({{b_1/a_1, b_1/a_2, a_3}\atop{b_1, b_1b_2/a_1a_2}}; q,  \frac{b_2}{a_3}\right).
\end{align*}
Using the Sears $_3\phi_2$ transformation formula, we easily conclude that
\begin{align*}
&{_3\phi_2}\left({{ax, ar, cx}\atop{acduvx, abrx}}q, bduv\right)\\
&=\frac{(abr/c, bcduvx; q)_\infty}{(bduv, abrx; q)_\infty}{_3\phi_2}\left({{cduv, cduvx/r, cx}\atop{acduvx, bcduvx}}; q, \frac{abr}{c}\right).
\end{align*}
This transformation formula shows  that Theorem~\ref{liuqtrans} is equivalent to the following formula:
\begin{align}
&\int_{u}^v \frac{(qx/u, qx/v, acduvx, bcduvx; q)_\infty}{(ax, bx, cx, dx; q)_\infty}\label{pliu:eqn1}\\
&\qquad \times {_3\phi_2}\left({{cduv, cduvx/r, cx}\atop{acduvx, bcduvx}}q, \frac{abr}{c}\right)d_q x\nonumber\\
&=\frac{(1-q)\Delta(u, v)(acuv, aduv, bcuv, bduv, cduv; q)_\infty}
{(au, av, bu, bv, cu, cv, du, dv; q)_\infty}\nonumber\\
&\qquad \times {_3\phi_2}\left({{cu, cv, cuv/r}\atop{acuv, bcuv}}; q, \frac{abr}{c}\right).\nonumber
\end{align}
In order to prove the identity (\ref{pliu:eqn1}), we need to prove the following lemma.
\begin{lem}\label{liuqintlemm}
The $q$-integral in (\ref{pliu:eqn1}) is a two-variable analytic function of $a$ and $b$, which is analytic at $(0, 0)\in \mathbb{C}^2.$
\end{lem}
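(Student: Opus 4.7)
The plan is to expand the Thomae--Jackson $q$-integral in (\ref{pliu:eqn1}) as the explicit series
\begin{equation*}
I(a,b)=(1-q)\sum_{n=0}^{\infty}\bigl[v\,f(vq^n;a,b)-u\,f(uq^n;a,b)\bigr]\,q^n,
\end{equation*}
where $f(x;a,b)$ denotes the integrand of (\ref{pliu:eqn1}), and then verify that this series converges uniformly on a small bidisk $\{(a,b):|a|\le\varepsilon,\ |b|\le\varepsilon\}$. For each fixed $n$, after the substitution $x=uq^n$ or $x=vq^n$, the summand is meromorphic in $(a,b)$ with its only possible poles lying on hyperplanes such as $auq^j=1$, $buq^j=1$, $acdu^2v\,q^j=1$, and so on. For $\varepsilon>0$ chosen small enough all of these hyperplanes miss the bidisk, the argument $abr/c$ of the embedded ${_3\phi_2}$ is small, and its lower parameters $acdu^2vq^n$ and $bcdu^2vq^n$ stay bounded away from $q^{-k}$, so every summand is analytic on the bidisk.

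The heart of the argument is a uniform estimate. After substituting $x=uq^n$, the inner ${_3\phi_2}$ takes the form
\begin{equation*}
{_3\phi_2}\left({{cduv,\,(cdu^2v/r)q^n,\,(cu)q^n}\atop{(acdu^2v)q^n,\,(bcdu^2v)q^n}};\,q,\,\frac{abr}{c}\right),
\end{equation*}
which fits the template of Proposition~\ref{convergenceseries} exactly, with the ``bare'' parameter $cduv$ in the numerator and the two pairs $cdu^2v/r,\,cu$ and $acdu^2v,\,bcdu^2v$ each multiplied by $q^n$. Proposition~\ref{convergenceseries} therefore bounds this ${_3\phi_2}$ by a quotient of the form $(-|\cdot|;q)_\infty/(|\cdot|;q)_\infty$ that is independent of $n$ and finite on the bidisk. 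The symmetric substitution $x=vq^n$ is handled identically by swapping $u\leftrightarrow v$.

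For the remaining prefactor I would rewrite each $(awq^n;q)_\infty$ as $(aw;q)_\infty/(aw;q)_n$ and then bound every numerator $|(aw;q)_n|$ from above by $(-|aw|;q)_\infty$ and every denominator $|(aw;q)_\infty|$ from below by $(|aw|;q)_\infty$, both via Proposition~\ref{liuequality}. All such bounds are finite and independent of $n$ on the bidisk. Combined with the explicit $q^n$ in the Jackson sum, this yields a geometric majorant of the form $Cq^n$, so the Weierstrass $M$-test gives uniform convergence of the series, and Weierstrass' theorem on uniform limits of analytic functions implies that $I(a,b)$ is analytic at $(0,0)$. The main obstacle I anticipate is organizational rather than conceptual: one must carefully track whether each of the numerous $q$-shifted factorials arising at $x=uq^n$ and $x=vq^n$ belongs in the numerator or denominator of the bound, and verify that the resulting products are simultaneously finite for a single sufficiently small radius $\varepsilon$.
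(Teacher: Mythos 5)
Your proposal is correct and follows essentially the same route as the paper: expand the Thomae--Jackson $q$-integral as a Jackson sum, bound the inner ${_3\phi_2}$ at $x=uq^n$ (and, by the $u\leftrightarrow v$ symmetry, at $x=vq^n$) uniformly in $n$ via Proposition~\ref{convergenceseries}, bound the prefactor via Proposition~\ref{liuequality}, and conclude uniform convergence from the explicit $q^n$ factor. Your version is marginally cleaner in that it argues joint analyticity directly on a bidisk, where the paper treats $a$ and $b$ separately and then asserts two-variable analyticity, but the substance is identical.
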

\begin{proof}
For the sake of convenience, we define the compact  notation $A_n$ and $B_n$ by
\begin{align*}
A_n(a, b, c, d, r,  u, v):&={_3\phi_2}\left({{cduv, cdvu^2q^n/r, cuq^n}\atop{acdvu^2q^n, bcdvu^2q^n}}; q, \frac{abr}{c}\right),\\
B_n(a, b, c, d, r, u, v):&=u(1-q)\frac{(q^{n+1}, q^{n+1}u/v, acdvu^2 q^n, bcdvu^2 q^n; q )_\infty}{(auq^n, buq^n, cuq^n, duq^n; q)_\infty}.
\end{align*}
Using the definition of $q$-integral, we find that the left-hand side of the equation in (\ref{pliu:eqn1}) can
be written as
\begin{align}
&\sum_{n=0}^\infty A_n(a, b, c, d, r, v, u)B_n(a, b, c, d, r, v, u)q^n \label{pliu:eqn2}\\
&-\sum_{n=0}^\infty A_n(a, b, c, d, r, u, v)B_n(a, b, c, d, r, u, v)q^n.\nonumber
\end{align}

Next we will prove that this series converges to a two-variable analytic function of $a$ and $b$ at $(0, 0)\in \mathbb{C}^2.$
It is obvious that the first summation can be obtained from the second one by interchanging $u$ and $v$. Thus we only need consider
the second summation, namely,
\begin{equation}
\sum_{n=0}^\infty A_n(a, b, c, d, r, u, v)B_n(a, b, c, d, r, u, v)q^n.
\label{pliu:eqn3}
\end{equation}
We divide our proof into two cases according $cr\not=0$ and $cr=0$. We only prove the $cr\not=0$ case
and the $cr=0$ case can be proved in the same way. Without loss of generality, we can assume that
\[
\max\{|a|, |b|, |d|\}<1~\text{and}~0<|c|, |r|,  |u|, |v|<1.
\]
Using Propositions~\ref{liuequality} and \ref{convergenceseries} and doing some simple calculations, we find that
\begin{align*}
|A_n(a, b, c, d, r, u, v)|&\le \frac{(-|abduvr|, -|cdvu^2/r|, -|cu|; q)_\infty}{(|abr/c|, |acdvu^2|, |bcdvu^2|; q)_\infty}\\
&\le \frac{(-1, -|r|, -|1/r|; q)_\infty}{(|r/c|, |u|, |u|; q)_\infty}.
\end{align*}
On making use of  Proposition~\ref{liuequality} and some elementary calculations, we  deduce that
\[
|B_n(a, b, c, d, r, u, v)|\le \frac{(-1; q)_\infty^3 (-|qu/v|; q)_\infty}{(|u|; q)^4_\infty}.
\]
Using the triangular inequality and the above two inequalities, we conclude that
\begin{align*}
&|\sum_{n=0}^\infty A_n(a, b, c, d, r, u, v)B_n(a, b, c, d, r, u, v)q^n|\\
&\le \sum_{n=0}^\infty|A_n(a, b, c, d, r, u, v)||B_n(a, b, c, d, r, u, v)|q^n\\
&\le \frac{(-1; q)_\infty^4 (-|qu/v|, -|r|, -|1/r|; q)_\infty}{(1-q)(|u|; q)^6_\infty (|r/c|; q)_\infty}.
\end{align*}
This indicates that the series in (\ref{pliu:eqn3}) converges
absolutely and uniformly for $|a|<1.$ It is easily to see that every term of this series
is a analytic at $a=0$. Thus, this series converges to an analytic function of $a$, which
is  analytic at $a=0$.

By symmetry, this series also converges to an analytic function of $b$, which
a analytic at $b=0$. Hence the series in (\ref{pliu:eqn2}) converges to a two-variable analytic
function of $a$ and $b$, which is analytic at $(a, b)=(0, 0)\in \mathbb{C}^2$.

Interchanging $u$ and $v$ in (\ref{pliu:eqn3})  we immediately find that the first series in (\ref{pliu:eqn2})
is also analytic at $(a, b)=(0, 0)\in \mathbb{C}^2$.
In summary, the left-hand side of the equation in (\ref{pliu:eqn1}) is a two-variable analytic function
of $a$ and $b$, which is analytic at $(a, b)=(0, 0)\in \mathbb{C}^2$.
\end{proof}
Now we begin to prove (\ref{pliu:eqn1}) by using Lemma~\ref{liuqintlemm}, Proposition~\ref{liuqpde}
and Theorem~\ref{liuqexpansionthm}.
\begin{proof}
Using the definition of $L$ in Proposition~\ref{liuqpde} and some simple computations, we find that
\begin{align*}
L(a, b, duv, cduvx, x, r)&=\frac{(acduvx, bdcuvx, abr/c; q)_\infty}
{(ax, bx, ar, br, aduv, bduv; q)_\infty}\\
&\quad\times {_3\phi_2}\left({{cduv, cduvx/r, cx}\atop{acduvx, bcduvx}}; q, \frac{abr}{c}\right),
\end{align*}
\begin{align*}
L(a, b, r, cuv, u, v)&=\frac{(acuv, bcuv, abr/c; q)_\infty}{(au, av, bu, bv, ar, br; q)_\infty}\\
&\quad\times {_3\phi_2}\left({{cu, cv, cuv/r}\atop{acuv, bcuv}}; q, \frac{abr}{c}\right).
\end{align*}
Using these two equations we can rewrite (\ref{pliu:eqn1}) in the form
\begin{align}
&\int_{u}^v \frac{(qx/u, qx/v; q)_\infty}{(cx, dx; q)_\infty} L(a, b, duv, cduvx, x, r) d_q x \label{pliu:eqn4}\\
&\quad=\frac{(1-q)\Delta(u, v)(cduv; q)_\infty}{(cu, cv, du, dv; q)_\infty}L(a, b, r, cuv, u, v).\nonumber
\end{align}

If we use $f(a, b)$ to denote the left-hand side of (\ref{pliu:eqn4}), then, $f(a, b)$  is analytic at
$(0, 0)\in\mathbb{C}^2$, and satisfies the $q$-partial differential equation $\partial_{q, a}\{f\}=\partial_{q, b}\{f\}.$
Thus, by Theorem~\ref{liuqexpansionthm}, there exists a sequence $\{\alpha_n\}$ independent of $a$ and $b$ such that
\[
f(a, b)=\sum_{n=0}^\infty \alpha_n h_n(a, b|q).
\]
On putting $b=0$ in this equation,  using $h_n(a, 0|q)=a^n,$ and noting the definition of $f(a, b)$, we find that
\[
f(a, 0)=\frac{1}{(ar, aduv; q)_\infty}\int_{u}^v \frac{(qx/u, qx/v, acduvx; q)_\infty}{(ax, cx, dx; q)_\infty} d_q x=\sum_{n=0}^\infty \alpha_n a^n.
\]
Applying the Al--Salam--Verma integral to the $q$-integral in this equation, we deduce that
\begin{equation}
\sum_{n=0}^\infty \alpha_n a^n=\frac{(1-q)\Delta(u, v)(acuv, cduv; q)_\infty}{(au, av, ar, cu, cv, du, dv; q)_\infty}.
\label{pliu:eqn5}
\end{equation}

If we use $g(a, b)$ to denote the right-hand side of (\ref{pliu:eqn4}), then, $g(a, b)$  is analytic at
$(0, 0)\in\mathbb{C}^2$, and satisfies the $q$-partial differential equation $\partial_{q, a}\{g\}=\partial_{q, b}\{g\}.$
Thus, by Theorem~\ref{liuqexpansionthm}, there exists a sequence $\{\beta_n\}$ independent of $a$ and $b$ such that
\[
g(a, b)=\sum_{n=0}^\infty \beta_n h_n(a, b|q).
\]
On putting $b=0$ in this equation,  using $h_n(a, 0|q)=a^n,$ and noting the definition of $g(a, b)$, we find that
\[
g(a, 0)=\sum_{n=0}^\infty \beta_n a^n=\frac{(1-q)\Delta(u, v)(acuv, cduv; q)_\infty}{(au, av, ar, cu, cv, du, dv; q)_\infty}.
\]
Comparing this equation with (\ref{pliu:eqn5}), we find that $\alpha_n=\beta_n$, which implies that $f(a, b)=g(a, b)$,
which shows that the identity in (\ref{pliu:eqn1}) holds.
Thus we have proved Theorem~\ref{liuqtrans} for $|a|$ and $|b|$ sufficiently small.  By analytic continuation,
we complete the proof of  Theorem~\ref{liuqtrans}.
\end{proof}
\section{The proof of Theorem~\ref{ramliureci}}
In order to prove Theorem~\ref{ramliureci}, we first set up the following lemma.
\begin{lem}\label{liurecilem} The series on the left-hand side of the equation in Theorem~\ref{ramliureci}
represents a two-variable analytic function of $a$ and $b$, which is analytic at $(0, 0)\in \mathbb{C}^2.$
\end{lem}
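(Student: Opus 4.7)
The plan is to parallel the proof of Lemma~\ref{liuqintlemm}. Denote by $S(a,b,c,d,r,u,v)$ the double $q$-series in Definition~\ref{liudefn}. Then the left-hand side of Theorem~\ref{ramliureci} equals $S(a,b,c,d,r,u,v)-S(a,b,c,d,r,v,u)$, so by the $u\leftrightarrow v$ symmetry it suffices to show that $S(a,b,c,d,r,u,v)$ is analytic in $(a,b)$ at $(0,0)\in\mathbb{C}^2$. Fix a small bidisk $|a|,|b|\le\varepsilon$ on which, together with the standing hypotheses, one still has $|av|,|bv|<1$ and $|abcruv/q|<1$.

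For the inner $_3\phi_2$, I would apply Proposition~\ref{convergenceseries} with the ``free'' parameter set to $q/cu$, the parameters tagged with $q^n$ coming from $a_1=q$, $a_2=vq/r$, $b_1=avq$, $b_2=bvq$, and the argument $x=abcruv/q$. This yields the uniform-in-$n$ estimate
\[
\left|{_3\phi_2}\!\left({{q^{n+1}, vq^{n+1}/r, q/cu}\atop{avq^{n+1}, bvq^{n+1}}}; q, \frac{abcruv}{q}\right)\right|
\le\frac{(-|abruv/c|,\,-q,\,-|vq/r|;q)_\infty}{(|abcruv/q|,\,|avq|,\,|bvq|;q)_\infty},
\]
which is finite and continuous in $(a,b)$ on the bidisk.

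For the outer coefficient, Proposition~\ref{liuequality} gives
\[
|(q/du,\,acuv,\,bcuv;q)_n|\le (-|q/du|;q)_\infty(-|acuv|;q)_\infty(-|bcuv|;q)_\infty,
\]
while the denominator is bounded below in absolute value by the strictly positive $(|av|,|bv|,|cv|;q)_\infty$. Combined with the hypothesis $|dv|<1$, which supplies the geometric factor $|dv|^n$, this produces an estimate of the form $|n\text{-th term of }S|\le K\,|dv|^n$ with $K$ finite and continuous in $(a,b)$ on the bidisk. Hence $S(a,b,c,d,r,u,v)$ converges absolutely and uniformly there; since each term is analytic in $(a,b)$ on the bidisk (the denominators $(av,bv,cv;q)_{n+1}$ do not vanish for small $|a|,|b|$), the Weierstrass theorem on uniform convergence of analytic functions gives analyticity at $(0,0)$. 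The swapped term $S(a,b,c,d,r,v,u)$ is handled identically, so the difference is analytic, as claimed. The only point requiring care is the simultaneous bookkeeping of two convergence estimates — the outer geometric series in $n$ and the inner $_3\phi_2$ — but there is no genuine obstacle beyond routine checking that the bounds produced by Propositions~\ref{liuequality} and \ref{convergenceseries} are compatible on a common neighbourhood of the origin.
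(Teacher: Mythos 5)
Your proposal is correct and follows essentially the same route as the paper: split the left-hand side by the $u\leftrightarrow v$ symmetry, bound the outer coefficient by Proposition~\ref{liuequality} and the inner $_3\phi_2$ uniformly in $n$ by Proposition~\ref{convergenceseries}, dominate the series by a geometric series in $|dv|^n$, and invoke uniform convergence of analytic terms. Two trivial remarks: in applying Proposition~\ref{convergenceseries} the quantity $|ax|$ is $\left|\frac{q}{cu}\cdot\frac{abcruv}{q}\right|=|abrv|$ rather than $|abruv/c|$ (harmless, since any finite uniform bound suffices), and, exactly as in the paper, your estimates tacitly assume $rcd\neq 0$, the degenerate case being handled by the same argument with the corresponding factors removed.
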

\begin{proof}
The proof can be divided into two cases according to $rcd\not=0$ and $rcd=0.$
We only prove the $rcd\not=0$ case and the $rcd=0$ case can be proved similarly.

For the sake of simplicity, we will use the $C_n$ and $D_n$ to denote
\begin{align*}
C_n(a, b, c, d, u, v):&=v\frac{(q/du, acuv, bcuv; q)_n (dv)^n}{(av, bv, cv; q)_{n+1}},\\
D_n(a, b, c, r, u, v):&= {_3\phi_2}\left({{q^{n+1}, vq^{n+1}/r, q/cu}\atop{avq^{n+1}, bvq^{n+1}}}q; \frac{abcruv}{q}\right).
\end{align*}
Using these notations we can write the left-hand side of the equation in Theorem~\ref{ramliureci}
as
\begin{align}
&\sum_{n=0}^\infty C_n(a, b, c, d, u, v)D_n(a, b, c, r, u, v)\label{liureci:eqn1}\\
&-\sum_{n=0}^\infty C_n(a, b, c, d, v, u)D_n(a, b, c, r, v, u).\nonumber
\end{align}
Now we will show that this series converges to a two-variable analytic function of $a$ and
$b$ at $(0, 0)\in \mathbb{C}^2.$ It is obvious that the second summation in the above equation
can be obtained from the first summation by interchanging $u$ and $v$, so we only need consider
the first summation
\begin{align}
\sum_{n=0}^\infty C_n(a, b, c, d, u, v)D_n(a, b, c, r, u, v)\label{liureci:eqn2}
\end{align}

Without loss of generality, we may assume that $\max\{|a|, |b|\}<1$ and in order to simplify
the discussion, we only consider the case $rcd\not=0,$ as the case $rcd=0$ is similar. 
Using Proposition~\ref{liuequality} and some simple calculation, we conclude  that
\[
|C_n(a, b, c, d, u, v)|\le \frac{|v|(-|1/du|, -|v|, -|v|; q)_\infty|dv|^n}{(|v|, |v|, |cv|; q)_\infty}.
\]
On making use of  Proposition~\ref{convergenceseries} and a direct computation, we easily deduce that
\begin{align*}
|D_n(a, b, c, r, u, v)|&\le \frac {(-|abrv|, -q, -|v/r|; q)_\infty}
{(|v|, |v|, |q/cu|; q)_\infty}\\
&\le \frac {(-|rv|, -q, -|v/r|; q)_\infty}
{(|v|, |v|, |q/cu|; q)_\infty}.
\end{align*}

Using the triangular inequality and these two inequalities, we have
\begin{align*}
&\left|\sum_{n=0}^\infty C_n(a, b, c, d, u, v)D_n(a, b, c, r, u, v)\right|\\
&\le \sum_{n=0}^\infty |C_n(a, b, c, d, u, v)D_n(a, b, c, r, u, v)| \\
&\le |v|\frac{(-q, -|rv|, -|v/r|, -|1/du|; q)_\infty}
{(|v|; q)_\infty^4 (|cv|, |q/cu|; q)_\infty}
\sum_{n=0}^\infty |dv|^n\\
&=\frac{|v|(-q, -|rv|, -|v/r|, -|1/du|; q)_\infty}
{(1-|dv|)(|v|; q)_\infty^4 (|cv|, |q/cu|; q)_\infty}.
\end{align*}
This shows that the series in (\ref{liureci:eqn2}) converges absolutely and uniformly
for $\max\{|a|, |b|\}<1.$ It is easily seen that every term of this series is analytic
at $(a, b)=(0, 0)\in \mathbb{C}^2,$  thus this series converges to a two-variable analytic
function of $a$ and $b$ which is analytic at $(0, 0)\in \mathbb{C}^2.$
\end{proof}
Now we begin to prove Theorem~\ref{ramliureci}
by Theorem~\ref{liuqexpansionthm},  Proposition~\ref{liuqpde} and Lemma~\ref{liurecilem}.
\begin{proof} Using the definition of $L$ in Proposition~\ref{liuqpde}, we deduce that
\begin{align*}
&{_3\phi_2}\left({{du, dv, duv/r}\atop{aduv, bduv}}; q,  \frac{abr}{d}\right)\\
&=\frac {(ar, br, au, bu, av, bv; q)_\infty} {(aduv, bduv, abr/d; q)_\infty}
L(a, b, u, duv, v, r),
\end{align*}
\begin{align*}
&{_3\phi_2}\left({{q^{n+1}, vq^{n+1}/r, q/cu}\atop{avq^{n+1}, bvq^{n+1}}}; q, \frac{abcruv}{q}\right)\\
&=\frac{(av, bv, ar, br, acuvq^n, bcuvq^n; q)_\infty} {(avq^{n+1}, bvq^{n+1}, abcruv/q; q)_\infty}
L(a, b, r, vq^{n+1}, v, cuvq^n).
\end{align*}
Using these two equations we can rewrite Theorem~\ref{ramliureci} in the form
\begin{align}
&v\sum_{n=0}^\infty \frac{(q/du; q)_n (dv)^n}{(cv; q)_{n+1}} L(a, b, r, vq^{n+1}, v, cuvq^n)\label{liureci:eqn3}\\
&-u\sum_{n=0}^\infty \frac{(q/dv; q)_n (du)^n}{(cv; q)_{n+1}} L(a, b, r, uq^{n+1}, u, cuvq^n)\nonumber\\
&=\frac{\Delta(u, v)(cduv; q)_\infty}{(cu, cv, du, dv; q)_\infty} L(a, b, u, duv, v, r).\nonumber
\end{align}

If we use $f(a, b)$ to denote the left-hand side of this equation, then, from Lemma~\ref{liurecilem}
we know that $f(a, b)$ is analytic at $(0, 0)\in \mathbb{C}^2.$ Using Proposition~\ref{liuqpde}, we
easily see that $f(a, b)$ satisfies the $q$-partial differential equation
$\partial_{q, a}\{f\}=\partial_{q, b}\{f\}.$  Thus by Theorem~\ref{liuqexpansionthm}, there exists
a sequence $\{\alpha_n\}$ independent of $a$ and $b$ such that
\[
f(a, b)=\sum_{n=0}^\infty \alpha_n h_n(a, b).
\]
Putting $b=0$ in this equation and using the fact $h_n(a, 0)=a^n$,
we obtain
\[
f(a, 0)=\sum_{n=0}^\infty \alpha_n a^n.
\]
Noting the definition of $f(a, b)$ and using Theorem~\ref{liuppreciprocity},  we find that
\begin{align*}
&(ar, acuv; q)_\infty f(a, 0)\\
&= v\sum_{n=0}^\infty \frac{(q/du, acuv; q)_n(dv)^n}{(av, cv; q)_{n+1}}
-u\sum_{n=0}^\infty \frac{(q/dv, acuv; q)_{n}(du)^n}{(au, cu; q)_{n+1}}\\
&=\frac{\Delta(u, v)( aduv, acuv, cduv; q)_\infty}{(au, av, cu, cv, du, dv; q)_\infty}.
\end{align*}

If we use $g(a, b)$ to denote the right-hand side of (\ref{liureci:eqn3}), then, $g(a, b)$  is analytic at
$(0, 0)\in\mathbb{C}^2$, and satisfies the $q$-partial differential equation $\partial_{q, a}\{g\}=\partial_{q, b}\{g\}.$
Thus, by Theorem~\ref{liuqexpansionthm}, there exists a sequence $\{\beta_n\}$ independent of $a$ and $b$ such that
\[
g(a, b)=\sum_{n=0}^\infty \beta_n h_n(a, b|q).
\]
On putting $b=0$ in this equation,  using $h_n(a, 0|q)=a^n,$ and noting the definition of $g(a, b)$, we find that
\[
g(a, 0)=\sum_{n=0}^\infty \beta_n a^n=\frac{\Delta(u, v) (aduv, cduv; q)_\infty}{(ar, au, av, cu, cv, du, dv; q)_\infty}.
\]
It follows that
\[
\sum_{n=0}^\infty \alpha_n a^n=\sum_{n=0}^\infty \beta_n a^n.
\]
Thus we have $\alpha_n=\beta_n,$ which implies that $f(a, b)=g(a, b).$ Hence we have
proved Theorem~\ref{ramliureci} for $|a|$ and $|b|$ sufficiently small and $0<q<1$. Using analytic
continuation, this completes the proof of Theorem~\ref{ramliureci}.
\end{proof}

\section{A beta integral formula}
\begin{defn}\label{defhermite}
For$\ x=\cos \T$, we define  $h(x; a)$ and
 $ h(x; a_1, a_2, \ldots, a_m)$  as follows:
 \begin{align*}
&h(x; a)=(ae^{i\T}, ae^{-i\T}; q)_\infty=\prod_{k=0}^\infty (1-2q^k ax+q^{2k}a^2), \\
&h(x; a_1, a_2, \ldots, a_m)=h(x; a_1)h(x; a_2)\cdots h(x; a_m).
 \end{align*}
\end{defn}
In this section we will use Theorem~\ref{liuqtrans} to prove the following beta integral formula,
which including  the Askey--Wilson integral as a special case.
\begin{thm} \label{betaint}For $\max\{|a|, |b|, |c, |d|, |abr/c|\}<1,$ we have the integral formula
\begin{align*}
&\frac{2\pi a_0}{(q, ac, ad, bc, cd, abr/c; q)_\infty}=\frac{2\pi (abdr; q)_\infty}{(q, ac,ad,  bc, bd, cd, abr/c; q)_\infty}\\
&=\int_{0}^{\pi}
\frac{h(\cos 2\theta; 1)}
{ h(\cos \theta; a, b, c, d)} {_3\phi_2}\left({{ce^{i\theta}, ce^{-i\theta}, c/r}\atop{ac, bc}}; q, \frac{abr}{c}\right) d\theta.
\end{align*}
\end{thm}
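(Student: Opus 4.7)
The plan is to apply the $q$-partial differential equation technique developed in Sections~4--5 to an auxiliary $\theta$-integral. The key observation is that the function $L$ from Proposition~\ref{liuqpde}, specialized with $s=e^{-i\theta}$, $t=e^{i\theta}$, $u=r$, $v=c$, satisfies
\[
L(a,b,r,c,e^{-i\theta},e^{i\theta})=\frac{(ac,bc,abr/c;q)_\infty}{(ar,br;q)_\infty\, h(\cos\theta;a,b)}\,{_3\phi_2}\left({{ce^{i\theta},ce^{-i\theta},c/r}\atop{ac,bc}};q,\frac{abr}{c}\right),
\]
so the integrand in Theorem~\ref{betaint} differs from $L$ only by the $a,b$-independent factor $h(\cos 2\theta;1)(ar,br;q)_\infty/[h(\cos\theta;c,d)(ac,bc,abr/c;q)_\infty]$. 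Denote the integral in Theorem~\ref{betaint} by $I(a,b)$ and set
\[
f(a,b):=\frac{(ac,bc,abr/c;q)_\infty}{(ar,br;q)_\infty}\,I(a,b)=\int_0^\pi\frac{h(\cos 2\theta;1)}{h(\cos\theta;c,d)}\,L(a,b,r,c,e^{-i\theta},e^{i\theta})\,d\theta;
\]
the task reduces to finding a closed form for $f(a,b)$.

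First I would verify that $f$ is analytic at $(0,0)\in\mathbb{C}^2$ by a uniform bound on the integrand over $\theta\in[0,\pi]$ and a small polydisc, in the same style as Lemmas~\ref{liuqintlemm} and \ref{liurecilem} using Propositions~\ref{liuequality} and \ref{convergenceseries}. Since Proposition~\ref{liuqpde} supplies $\partial_{q,a}\{L\}=\partial_{q,b}\{L\}$ and the prefactor $h(\cos 2\theta;1)/h(\cos\theta;c,d)$ is independent of $a$ and $b$, interchanging $q$-derivatives with the $\theta$-integral gives $\partial_{q,a}\{f\}=\partial_{q,b}\{f\}$. Theorem~\ref{liuqexpansionthm} then yields a sequence $\{\alpha_n\}$ with $f(a,b)=\sum_{n\ge 0}\alpha_n\,h_n(a,b|q)$.

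To pin down the $\alpha_n$, specialize to $b=0$. The ${_3\phi_2}$ in $L$ has argument $z=abr/c=0$, so it collapses to $1$ and one obtains
\[
f(a,0)=\frac{(ac;q)_\infty}{(ar;q)_\infty}\int_0^\pi\frac{h(\cos 2\theta;1)}{h(\cos\theta;a,c,d)}\,d\theta=\frac{2\pi}{(ar;q)_\infty\,(q,ad,cd;q)_\infty},
\]
where the second equality is the three-parameter Askey--Wilson integral. Introduce the candidate closed form
\[
g(a,b):=\frac{2\pi\,(abdr;q)_\infty}{(ar,br;q)_\infty\,(q,ad,bd,cd;q)_\infty},
\]
and check directly that $(1-abdr)\,\partial_{q,a}\{g\}/g=r+d-ard-bdr$, which is symmetric in $a\leftrightarrow b$, whence $\partial_{q,a}\{g\}=\partial_{q,b}\{g\}$; since $g$ is manifestly analytic at $(0,0)$, Theorem~\ref{liuqexpansionthm} gives $g(a,b)=\sum_{n\ge 0}\beta_n\,h_n(a,b|q)$. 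Because $g(a,0)=f(a,0)$, comparing Taylor coefficients in $a$ forces $\alpha_n=\beta_n$, whence $f=g$. Multiplying through by $(ar,br;q)_\infty/(ac,bc,abr/c;q)_\infty$ yields Theorem~\ref{betaint} for sufficiently small $|a|,|b|$, and analytic continuation removes the restriction.

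The main obstacle is the base case $f(a,0)$: the $q$-PDE characterisation only determines $f$ up to its restriction to the slice $b=0$, and that restriction is a one-variable classical beta integral (three-parameter Askey--Wilson) that has to be supplied separately. Every other step is either a direct $q$-calculation (the $q$-PDE for $g$, the collapse of the ${_3\phi_2}$ at $b=0$) or a convergence estimate of the type already developed in Sections~2--4.
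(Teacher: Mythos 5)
Your argument is essentially correct, but it is a genuinely different route from the one the paper takes. The paper does not re-run the $q$-PDE machinery on the $\theta$-integral at all: it specializes the already-proved Theorem~\ref{liuqtrans} at $(u,v)=(e^{i\theta},e^{-i\theta})$, writes the resulting Thomae--Jackson $q$-integral as a Fourier series $2(1-q)a_0+2\sum_{k\ge1}\alpha_k\cos k\theta$ whose constant term is governed by the constant term $a_0=(abdr;q)_\infty/(bd;q)_\infty$ of the Taylor expansion of the integrand $I(x)$ (computed by the $q$-binomial theorem), and then integrates over $[-\pi,\pi]$ to kill all $k\ge1$ modes. That approach buys the theorem directly from Theorem~\ref{liuqtrans} plus Fourier orthogonality, with no $q$-beta integral as input; indeed the Askey--Wilson integral then falls out as the corollary $r=c$. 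Your approach instead treats the $\theta$-integral as a new two-variable analytic function of $(a,b)$, verifies $\partial_{q,a}=\partial_{q,b}$ via Proposition~\ref{liuqpde} (your identification $L(a,b,r,c,e^{-i\theta},e^{i\theta})$ and the $q$-difference computation $(1-abdr)\partial_{q,a}\{g\}/g=r+d-ard-bdr$ both check out), and pins everything down on the slice $b=0$. One small slip in exposition: the factor by which the integrand of the theorem differs from $L$ is not $(a,b)$-independent as you state; only after you pull the $(ac,bc,abr/c;q)_\infty/(ar,br;q)_\infty$ prefactor outside the integral is the remaining weight $h(\cos2\theta;1)/h(\cos\theta;c,d)$ free of $a,b$, which is what your definition of $f$ actually does, so the argument itself is unaffected.

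The one substantive point you must settle is the base case $f(a,0)$, i.e.\ the three-parameter integral
\[
\int_0^\pi\frac{h(\cos2\theta;1)}{h(\cos\theta;a,c,d)}\,d\theta=\frac{2\pi}{(q,ac,ad,cd;q)_\infty}.
\]
Inside this paper the four-parameter Askey--Wilson integral is \emph{deduced from} Theorem~\ref{betaint}, so you cannot quote Theorem~\ref{askwilint} (or its specialization) without circularity. You therefore need an independent source: either cite the classical Askey--Wilson evaluation \cite{Ask+Wil}, or derive the three-parameter case from the Al--Salam--Verma integral (Proposition~\ref{salverpp}) at $(u,v)=(e^{i\theta},e^{-i\theta})$ by exactly the Fourier-coefficient extraction the paper uses. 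Once that is supplied, your proof is complete.
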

\begin{proof}
For the sake of brevity, we use $I(x)$ to denote the function
\begin{equation}
I(x)=\frac{(acdx, abrx; q)_\infty}{(ax, bx, cx, dx; q)_\infty}
{_3\phi_2}\left({{ar, ax, cx}\atop{acdx, abrx}}q, bd\right).
\label{awl:eqn1}
\end{equation}
It is easily seen that $I(x)$ is analytic near $x=0$.
Thus, there exists a sequence $\{a_k\}_{k=0}^\infty$ independent of $x$ such
that
\[
I(x)=a_0+\sum_{k=1}^\infty a_k x^k.
\]
By setting $x=0$ in the above equation and using the $q$-binomial theorem, we  conclude that
\begin{equation}
a_0=\sum_{n=0}^\infty \frac{(ar; q)_n (bd)^n}{(q; q)_n}=\frac{(abdr; q)_\infty}{(bd; q)_\infty}.\label{awl:eqn2}
\end{equation}
Noting the definition of $\Delta(u, v)$ in (\ref{theta:eqn1}) and using a simple computation, we easily find that
\begin{equation}
(e^{i\theta}-e^{-i\theta})\Delta(e^{i\theta}, e^{-i\theta})=(q; q)_\infty h(\cos 2\theta; 1).
\label{awl:eqn3}
\end{equation}

On  replacing $(u, v)$ by $(e^{i\theta}, e^{-i\theta})$ in Theorem~\ref{liuqtrans} and noting
the above equation , we deduce that
\begin{align}
&(e^{i\theta}-e^{-i\theta})\int_{e^{i\theta}}^{e^{-i\theta}} (qxe^{i\theta}, qxe^{-i\theta}; q)_\infty I(x)d_q x
\label{awl:eqn4}\\
&=\frac{(1-q)(q, ac, ad,  bc, cd, abr/c; q)_\infty h(\cos 2\theta; 1)}{h(\cos \theta; a, b, c, d)}\nonumber\\
&\qquad \times {_3\phi_2}\left({{ce^{i\theta}, ce^{-i\theta}, c/r}\atop{ac, bc}}; q, \frac{abr}{c}\right).\nonumber
\end{align}
Using the definition of the $q$-integral,  we find that the left-hand side
of this equation equals
\begin{align*}
&(1-q)(1-e^{-2i\theta}) \sum_{n=0}^\infty (q^{n+1}; q)_\infty (q^n e^{-2i\theta}; q)_\infty I(q^n e^{-i\theta})q^n\\
&+(1-q)(1-e^{2i\theta})\sum_{n=0}^\infty (q^{n+1}; q)_\infty (q^n e^{2i\theta}; q)_\infty I(q^n e^{i\theta})q^n.
\end{align*}
Inspecting the first series in the equation,  we see that
this series can be expanded in terms of the negative powers of
$\{e^{-ki\T}\}_{k=0}^\infty$, and the constant term of the
Fourier expansion of this series is $(1-q)a_0$. Thus, there exists a
sequence $\{\alpha_k\}_{k=1}^\infty$ independent of $\theta$ such that the first series
equals
\[
(1-q)a_0+\sum_{k=1}^\infty\alpha_k e^{-ik\theta}.
\]
Replacing $\theta$ by $-\theta$, we immediately find that the second series is equal to
\[
(1-q)a_0+\sum_{k=1}^\infty \alpha_k e^{ik\theta}.
\]
Combining the above two expressions together, we arrive at
\begin{align*}
&(e^{i\theta}-e^{-i\theta})\int_{e^{i\theta}}^{e^{-i\theta}} (qxe^{i\theta}, qxe^{-i\theta}; q)_\infty
I(x)d_q x\\
&\qquad=2(1-q)a_0+2\sum_{k=1}^\infty \alpha_k \cos k\theta.
\end{align*}
Comparing this equation with (\ref{awl:eqn3}), we are led to the Fourier series expansion
\begin{equation*}
2(1-q)a_0+2\sum_{k=1}^\infty \alpha_k \cos k\theta
\end{equation*}
\begin{align*}
&=\frac{(1-q)(q,  ac, ad, bc, cd, abr/c; q)_\infty h(\cos 2\theta; 1)}{h(\cos \theta; a, b, c, d)}\\
&\quad \times {_3\phi_2}\left({{ce^{i\theta}, ce^{-i\theta}, c/r}\atop{ac, bc}}; q, \frac{abr}{c}\right).
\end{align*}
On integrating  the above equation over $[-\pi, \pi]$ and using the fact
\[
\int_{-\pi}^\pi (\cos k \theta) d\theta=2\pi \delta_{k, 0},
\]
and noting that the integrand is an even function of $\theta,$ we deduce that
\begin{align*}
&\int_{0}^{\pi}
\frac{h(\cos 2\theta; 1)}
{ h(\cos \theta; a, b, c, d)} {_3\phi_2}\left({{ce^{i\theta}, ce^{-i\theta}, c/r}\atop{ac, bc}}; q, \frac{abr}{c}\right) d\theta\\
&=\frac{2\pi a_0}{(q, ac, ad, bc, cd, abr/c; q)_\infty}.
\end{align*}
Substituting the value of $a_0$ in (\ref{awl:eqn2}) into this equation, we complete the proof of Theorem~\ref{betaint}.
\end{proof}
Letting $r=c$ in Theorem~\ref{betaint}, we immediately obtain the Askey--Wilson integral formula \cite{Ask+Wil}.
\begin{thm}  \label{askwilint} If  max$\{|a|, |b|, |c|,
|d|\}<1$, then, we have
\begin{equation*}
\int_{0}^{\pi} \frac{h(\cos 2\T; 1)}{h(\cos \T; a, b, c, d)}d\T
=\frac{2\pi (abcd; q)_\infty}{(q, ab, ac, ad, bc, bd, cd;
q)_\infty}.
\end{equation*}
\end{thm}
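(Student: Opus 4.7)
The plan is to apply Theorem~\ref{betaint} directly with the specialization $r = c$ and observe that the ${_3\phi_2}$ series on the right-hand side collapses to $1$.

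First I would substitute $r = c$ into the integral formula of Theorem~\ref{betaint}. On the right-hand side, the integrand contains the factor
\[
{_3\phi_2}\!\left({{ce^{i\theta},\, ce^{-i\theta},\, c/r}\atop{ac,\, bc}}; q, \frac{abr}{c}\right),
\]
and with $r = c$ one of the numerator parameters becomes $c/r = 1$. Since $(1;q)_n = 0$ for every $n \geq 1$, only the $n=0$ term of the ${_3\phi_2}$ survives, and the series reduces to $1$. Consequently the integrand simplifies to $h(\cos 2\theta;1)/h(\cos\theta;a,b,c,d)$, which is exactly the Askey--Wilson integrand.

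Next I would track the effect of $r = c$ on the constants appearing on the left-hand side of Theorem~\ref{betaint}. The argument $abr/c$ of the shifted factorials becomes $ab$, and the factor $(abdr;q)_\infty$ becomes $(abcd;q)_\infty$. Hence the closed form
\[
\frac{2\pi (abdr;q)_\infty}{(q,ac,ad,bc,bd,cd,abr/c;q)_\infty}
\]
turns into
\[
\frac{2\pi (abcd;q)_\infty}{(q,ab,ac,ad,bc,bd,cd;q)_\infty},
\]
which is precisely the right-hand side of Theorem~\ref{askwilint}.

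Finally, I would verify that the convergence hypothesis $\max\{|a|,|b|,|c|,|d|\}<1$ of Theorem~\ref{askwilint} is covered by that of Theorem~\ref{betaint}: the latter requires $\max\{|a|,|b|,|c|,|d|,|abr/c|\}<1$, and after setting $r=c$ the extra condition reads $|ab|<1$, which is automatic from $|a|,|b|<1$. No real obstacle arises; the only subtlety worth flagging is the standard observation that a ${_r\phi_s}$ with a numerator parameter equal to $1$ terminates after a single term, and the entire content of the corollary is a parameter specialization of the already-established Theorem~\ref{betaint}.
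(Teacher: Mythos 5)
Your proposal is correct and is exactly the paper's route: the paper also obtains Theorem~\ref{askwilint} by setting $r=c$ in Theorem~\ref{betaint}, and your observation that the numerator parameter $c/r=1$ truncates the ${_3\phi_2}$ to its $n=0$ term is the (implicit) reason the specialization works. The bookkeeping of the remaining factors and the convergence condition is also right.
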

\begin{rem} \rm By setting $u=e^{i\theta}$ and $v=e^{-i\theta}$ in Proposition~\ref{sramliureci} and then using
the same argument as in the proof of  Theorem~\ref{betaint}, we can give a derivation of the Askey--Wilson
integral.
\end{rem}

The continuous $q$-Hermite polynomials $H_n(\cos \theta|q)$ is defined as
\begin{equation}
H_n(\cos \theta|q)=\sum_{k=0}^n {n \choose k}_q e^{i(n-2k)\theta}.
\label{qhermite;eqn1}
\end{equation}
Using the definition of  the homogeneous Rogers--Szeg\H{o} polynomials defined in (\ref{rseqn1}),
 it is easily seen that
 \begin{equation}
 H_n(\cos \theta|q)=h_n(e^{-i\theta}, e^{i\theta}|q).
 \label{qhermite;eqn2}
 \end{equation}
Putting $a=e^{-i\theta}$ and $b=e^{i\theta}$ in (\ref{rseqn2}), one can find
the following proposition.
\begin{prop}\label{Hermitepp} For $|t|<1,$ we have
\begin{equation}
\sum_{n=0}^\infty H_n(\cos \theta|q) \frac{t^n}{(q; q)_n}=\frac{1}{(te^{i\theta}, te^{-i\theta}; q)_\infty}.
\label{hermitegf}
\end{equation}
\end{prop}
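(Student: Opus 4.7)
The plan is to obtain Proposition~\ref{Hermitepp} as a direct specialization of the generating function (\ref{rseqn2}) for the homogeneous Rogers--Szeg\H{o} polynomials, using the identification (\ref{qhermite;eqn2}). This is essentially a substitution argument with a convergence check, so I do not expect any serious obstacle.

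First I would recall (\ref{rseqn2}): for $|at|<1$ and $|bt|<1$ we have
\[
\sum_{n=0}^\infty h_n(a,b \mid q)\frac{t^n}{(q;q)_n} = \frac{1}{(at,bt;q)_\infty}.
\]
This holds as an identity of analytic functions in $(a,b,t)$ on the indicated region. My plan is to specialize at $a = e^{-i\theta}$ and $b = e^{i\theta}$. With this choice, $|at| = |bt| = |t|$, so the conditions $|at|<1$ and $|bt|<1$ collapse to the single hypothesis $|t|<1$ stated in Proposition~\ref{Hermitepp}, and the right-hand side becomes $1/(te^{-i\theta},te^{i\theta};q)_\infty$, which is the target denominator (with factors written in either order).

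Next I would invoke the identification (\ref{qhermite;eqn2}), namely $H_n(\cos\theta \mid q) = h_n(e^{-i\theta},e^{i\theta}\mid q)$, which itself is immediate from the definitions (\ref{rseqn1}) and (\ref{qhermite;eqn1}): expanding $h_n(e^{-i\theta},e^{i\theta}\mid q) = \sum_{k=0}^n \binom{n}{k}_q e^{-ik\theta}e^{i(n-k)\theta} = \sum_{k=0}^n \binom{n}{k}_q e^{i(n-2k)\theta}$ matches (\ref{qhermite;eqn1}) term-by-term. Substituting this into the specialized generating function converts the left-hand side into $\sum_{n=0}^\infty H_n(\cos\theta \mid q) t^n/(q;q)_n$, yielding the desired identity.

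The only point requiring any care is ensuring absolute convergence so that the substitution in the $a,b$ variables is legitimate on the nose; but since each $h_n(a,b\mid q)$ is a polynomial in $a,b$, and the series (\ref{rseqn2}) converges absolutely on $|at|,|bt|<1$, specializing $a$ and $b$ to points of modulus $1$ is permissible whenever $|t|<1$. No analytic continuation or further machinery is needed, so the proof is essentially a two-line substitution.
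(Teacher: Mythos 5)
Your proposal is correct and matches the paper's own derivation: the paper obtains Proposition~\ref{Hermitepp} precisely by putting $a=e^{-i\theta}$ and $b=e^{i\theta}$ in (\ref{rseqn2}) and invoking the identification (\ref{qhermite;eqn2}). Your added remark on absolute convergence justifying the specialization to modulus-one values is a harmless elaboration of the same argument.
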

\begin{rem} \rm
One of the most important properties of the $q$-Hermite polynomials is that they satisfy the
following orthogonality relation, which was first proved by Szeg\H{o} \cite{Szeg}:
\begin{equation*}
\int_{0}^\pi H_m(\cos \theta|q) H_n(\cos \theta|q) h(\cos 2\theta; 1)d\theta=2\pi (q; q)_n \delta_{m, n}/(q; q)_\infty.
\label{qhermite:eqn1}
\end{equation*}
This orthogonality relation has been used by several authors to evaluate the Askey--Wilson integral
and other related $q$-beta integrals (see, for example \cite{SalamIsmail, Ism+Sta, Liu1997}).
We have just evaluated the Askey--Wilson integral without using the orthogonality relation for the $q$-Hermite polynomials.
We can use a special case of the Askey--Wilson integral formula  to give a new proof of the the orthogonality relation for the $q$-Hermite polynomials.
 The proof is as follows:

 Putting $c=d=0$ in the Askey--Wilson integral formula, we immediately deduce that
\begin{equation}
\int_{0}^{\pi} \frac{h(\cos 2\T; 1)}{h(\cos \T; a, b)}d\T
=\frac{2\pi }{(q, ab; q)_\infty}.
\label{qhermite:eqn2}
\end{equation}

On multiplying two copies of the identity in (\ref{hermitegf}), we find that
for $\max\{|a|, |b|\}<1,$
\[
\sum_{m, n=0}^\infty H_m(\cos \theta| q) H_n(\cos \theta|q)\frac{a^m b^n}{(q; q)_m(q; q)_n}
=\frac{1}{h(\cos \theta; a, b)}.
\]
Using Proposition~\ref{Hermitepp}, we can easily show that the above double series converges uniformly
for  $\max\{|a|, |b|\}<1$ on $0\le \theta \le \pi$.

Substituting this series into the left-hand side of
(\ref{qhermite:eqn2}) and then integrating term by term, and applying the $q$-binomial theorem to the
right-hand side of (\ref{qhermite:eqn2}),  we find that
\begin{align*}
&\sum_{m, n=0}^\infty \frac{a^m b^n}{(q; q)_m(q; q)_n}\int_{0}^\pi H_m(\cos \theta|q) H_n(\cos \theta|q) h(\cos 2\theta; 1) d\theta\\
&=\frac{2\pi }{(q; q)_\infty}\sum_{n=0}^\infty \frac{(ab)^n}{(q; q)_n}.
\end{align*}
Equating the coefficients of $a^m b^n$ on both sides of this equation, we arrive at the orthogonality relation for the
$q$-Hermite polynomials.
\end{rem}
\section{Some limiting cases of Theorem~\ref{liuppreciprocity}}
In this section we will discussed some limiting cases of Theorem~\ref{liuppreciprocity}.
\begin{prop}\label{alimitcases} For $a\not=q^{-m}, c\not=q^{-m}, d\not=q^{-m}, m=0, 1, 2, \ldots,$
we have
\begin{align*}
&\sum_{n=0}^\infty \frac{(q/d, ac; q)_n d^n}{(a, c; q)_{n+1}}\\
&\qquad \times \left(n+1+\sum_{k=0}^n \frac{aq^k}{1-aq^k}+\sum_{k=0}^n \frac{cq^k}{1-cq^k}
-\sum_{k=1}^n \frac{q^k}{d-q^k}\right)\\
&=\frac{(q; q)_\infty^3 (ac, ad, cd; q)_\infty}{(a, c, d; q)_\infty^2}.
\end{align*}
\end{prop}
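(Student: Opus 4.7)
The plan is to specialize Theorem~\ref{liuppreciprocity} at $u = 1$ and then let $v \to 1$. Setting $u = 1$ makes the two series on the left-hand side of that theorem coincide when $v = 1$, so their difference vanishes at $v = 1$; on the right-hand side,
\begin{align*}
\Delta(1,v) = v(q, 1/v, qv; q)_\infty = (v-1)(q; q)_\infty (q/v; q)_\infty (qv; q)_\infty
\end{align*}
also has a simple zero at $v = 1$. Dividing the identity of Theorem~\ref{liuppreciprocity} by $v - 1$ and passing to the limit $v \to 1$ should therefore produce a non-trivial identity, which I claim is precisely Proposition~\ref{alimitcases}.

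The right-hand side limit is immediate: $\Delta(1,v)/(v-1) \to (q;q)_\infty^3$, and the remaining factor $(adv, acv, cdv; q)_\infty / (a, av, c, cv, d, dv; q)_\infty$ specializes at $v=1$ to $(ac, ad, cd; q)_\infty/(a, c, d; q)_\infty^2$, which together reproduce the right-hand side of Proposition~\ref{alimitcases}.

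For the left-hand side I compute the limit termwise via logarithmic differentiation. Write
\begin{align*}
S_n(v) &= v\,\frac{(q/d;q)_n (acv;q)_n (dv)^n}{(av, cv; q)_{n+1}}, \qquad
R_n(v) = \frac{(q/dv, acv; q)_n \, d^n}{(a, c; q)_{n+1}},
\end{align*}
so that the left-hand side of Theorem~\ref{liuppreciprocity} with $u = 1$ equals $\sum_n S_n(v) - \sum_n R_n(v)$. Since $S_n(1) = R_n(1) = (q/d, ac; q)_n d^n/(a, c; q)_{n+1}$ the constant terms cancel, and the limit reduces to $\sum_n [S_n'(1) - R_n'(1)]$. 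A direct logarithmic-derivative calculation yields
\begin{align*}
\frac{S_n'(1)}{S_n(1)} &= (n+1) + \sum_{k=0}^{n} \frac{aq^k}{1-aq^k} + \sum_{k=0}^{n} \frac{cq^k}{1-cq^k} - \sum_{k=0}^{n-1}\frac{acq^k}{1-acq^k}, \\
\frac{R_n'(1)}{R_n(1)} &= \sum_{k=1}^{n}\frac{q^k}{d-q^k} - \sum_{k=0}^{n-1}\frac{acq^k}{1-acq^k}.
\end{align*}
The $(acq^k)$-sums cancel upon subtraction, leaving exactly the bracketed factor appearing in Proposition~\ref{alimitcases}.

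The only mildly technical point, and the one I expect to require the most care, is justifying the interchange of limit and summation. Restricting temporarily to $|a|, |c|, |d|$ sufficiently small, both series in Theorem~\ref{liuppreciprocity} and the corresponding series of $v$-derivatives of their summands converge uniformly on a complex neighborhood of $v = 1$, which legitimises termwise differentiation at $v = 1$. The resulting identity then extends to all $a, c, d$ avoiding the poles $q^{-m}$ by analytic continuation, completing the proof.
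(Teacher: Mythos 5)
Your proposal is correct and follows essentially the same route as the paper: the paper divides the identity of Theorem~\ref{liuppreciprocity} by $v-u$ and lets $v\to u$ via L'H\^opital's rule before rescaling $(au,cu,du)\to(a,c,d)$, which is the same limiting/termwise logarithmic-differentiation argument as your $u=1$, $v\to 1$ specialization. Your computed log-derivatives and the cancellation of the $(acq^k)$-sums check out, and your uniform-convergence plus analytic-continuation justification covers the interchange of limit and summation.
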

\begin{proof} Keeping in mind that $\Delta(u, v)=(v-u)(q, qu/v, qv/u; q)_\infty$, dividing both
sides of the equation in Theorem~\ref{liuppreciprocity} by $v-u$, then letting $v\to u$ in the resulting
equation, using L$'$H$\hat{\rm o}$spital's rule and simplifying, we deduce that
\begin{align*}
&\sum_{n=0}^\infty \frac{(q/du, acu^2; q)_n (du)^n}{(au, cu; q)_{n+1}}\\
&\qquad \times \left(n+1+\sum_{k=0}^n \frac{auq^k}{1-auq^k}+\sum_{k=0}^n \frac{cuq^k}{1-cuq^k}
-\sum_{k=1}^n \frac{q^k}{du-q^k}\right)\\
&=\frac{(q; q)_\infty^3 (acu^2, adu^2, cdu^2; q)_\infty}{(au, cu, du; q)_\infty^2}.
\end{align*}
On replacing $(au, bu, cu)$ by $(a, b, c)$, we complete the proof of  Proposition~\ref{alimitcases}.
\end{proof}
On setting $a=c=0$ in proposition~\ref{alimitcases}, we immediately obtain the following proposition.
\begin{prop}\label{limiteuler} For $d\not=q^{-m}, m=0, 1, 2, \ldots,$
we have
\begin{align*}
\sum_{n=0}^\infty (q/d; q)_n d^n \left(n+1-\sum_{k=1}^n \frac{q^k}{d-q^k}\right)
=\frac{(q; q)_\infty^3 }{(d; q)_\infty^2}.
\end{align*}
\end{prop}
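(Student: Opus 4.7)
The plan is to obtain the identity directly as the $a = c = 0$ specialization of Proposition~\ref{alimitcases}, which the excerpt itself announces. The hypotheses $a \neq q^{-m}$ and $c \neq q^{-m}$ required by Proposition~\ref{alimitcases} are trivially satisfied at $a = c = 0$ since $q^{-m} \neq 0$ for every $m \geq 0$, so no separate limiting argument is needed.

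First I would substitute $a = c = 0$ term by term into Proposition~\ref{alimitcases} and simplify. Using $(0;q)_n = 1$, the Pochhammer product $(q/d, ac; q)_n$ collapses to $(q/d; q)_n$, and the denominator $(a, c; q)_{n+1}$ reduces to $1$. Inside the parenthetical factor, both telescoping-like sums $\sum_{k=0}^{n}\tfrac{aq^k}{1 - a q^k}$ and $\sum_{k=0}^{n}\tfrac{cq^k}{1 - c q^k}$ vanish identically when $a = 0$ and $c = 0$, so only $n+1 - \sum_{k=1}^{n}\tfrac{q^k}{d - q^k}$ survives as the weight. On the right-hand side, $(ac, ad, cd; q)_\infty$ becomes $1$ and $(a, c; q)_\infty^2$ becomes $1$, leaving precisely $(q; q)_\infty^3 / (d; q)_\infty^2$. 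Matching the two sides gives the claimed formula.

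The only point that warrants brief mention is convergence of the resulting series, which is really the main technical issue such as it is. For any $d \neq q^{-m}$, the factor $(q/d; q)_n$ is bounded by $(-|q/d|; q)_\infty$, the quantity $\sum_{k=1}^{n}\tfrac{q^k}{d - q^k}$ is bounded uniformly in $n$, and $(n+1)|d|^n$ is summable provided $|d| < 1$, ensuring absolute convergence in that regime. The full range $d \neq q^{-m}$ then follows by analytic continuation in $d$, exactly as in the parent result Proposition~\ref{alimitcases}. No new ideas beyond the specialization are required, so I do not anticipate any obstacle of substance.
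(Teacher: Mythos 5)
Your proposal is correct and is exactly the paper's argument: Proposition~\ref{limiteuler} is obtained by setting $a=c=0$ in Proposition~\ref{alimitcases}, with the simplifications $(0;q)_n=1$ and the vanishing of the two sums in $a$ and $c$ proceeding just as you describe. The remarks on convergence are a harmless (and reasonable) addition beyond what the paper states.
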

On putting $d=0$ in Proposition~\ref{limiteuler}, we immediately obtain the following identity of Jacobi
\cite[p. 14]{Berndt2006}:
\[
(q; q)_\infty^3=\sum_{n=0}^\infty (-1)^n (2n+1) q^{n(n+1)/2}.
\]

On letting $d\to q$ Proposition~\ref{limiteuler}, we obtain the
Euler identity (see, for example \cite[p. 280]{Andrews1983})
\[
1-\sum_{n=1}^\infty (q; q)_{n-1} q^n=(q; q)_\infty.
\]

On taking $d=-q$ in Proposition~\ref{limiteuler}, we are led to the identity
\[
1+\sum_{n=1}^\infty (-q)^n (-q; q)_{n-1} \left(2n+3+2\sum_{k=1}^{n-1} \frac{q^k}{1+q^k}\right)
=\frac{(q; q)_\infty^3}{(-q; q)_\infty^2}.
\]
\begin{prop}\label{blimitcases} For $a\not=q^{m}, m=\pm 1, \pm 2, \ldots,$ we have
the Lambert series formula
\[
\frac{(q; q)_\infty^4}{(qa, q/a; q)_\infty^2}
=1+(1-a)^2 \sum_{n=1}^\infty \frac{n(q/a)^n}{1-aq^n}
+(1-1/a)^2 \sum_{n=1}^\infty \frac{n (qa)^n}{1-q^n/a}.
\]
\end{prop}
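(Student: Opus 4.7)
The plan is to derive the one-parameter intermediate identity
\[
\sum_{n=0}^{\infty} \frac{(q/a)^n}{(1-aq^n)^2} + \sum_{n=1}^{\infty} \frac{n(q/a)^n}{1-aq^n} = \frac{(q;q)_\infty^4}{(a, q/a;q)_\infty^2}
\]
by specializing Proposition~\ref{alimitcases}, and then to convert this intermediate identity into the stated symmetric Lambert series via a double-series reorganization.

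For the first step, in Proposition~\ref{alimitcases} I set $a=0$ and $d=q/c$. The $q$-shifted-factorial prefactor collapses to $\frac{(q/c)^n}{1-cq^n}$, and the parenthetical simplifies via the telescoping relation $\frac{q^k}{q/c-q^k}=\frac{cq^{k-1}}{1-cq^{k-1}}$ to $n+1+\frac{cq^n}{1-cq^n}$. Rewriting this last expression as $\frac{1+n(1-cq^n)}{1-cq^n}$ and renaming $c$ to $a$ yields the intermediate identity, since on the right side $ac=ad=0$ and $cd=q$ force the surviving product to be $(q;q)_\infty^4/(a, q/a;q)_\infty^2$.

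For the second step, I split off the $n=0$ term $\frac{1}{(1-a)^2}$ from the first sum of the intermediate identity. To convert the remaining tail $\sum_{n \geq 1}\frac{(q/a)^n}{(1-aq^n)^2}$, I expand $\frac{1}{(1-aq^n)^2}=\sum_{k \geq 0}(k+1)(aq^n)^k$ and reindex $m=k+1$ to obtain the double sum $\sum_{n,m \geq 1} m\,a^{m-n-1}q^{nm}$. Interchanging the roles of $n$ and $m$ gives $\sum_{n,m \geq 1} n\,a^{n-m-1}q^{nm}$, which, upon reversing the geometric expansion, equals $\frac{1}{a^2}\sum_{n \geq 1}\frac{n(qa)^n}{1-q^n/a}$. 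Substituting back, multiplying through by $(1-a)^2$, and using $(a;q)_\infty=(1-a)(qa;q)_\infty$ together with $(1-1/a)^2=(1-a)^2/a^2$ produces Proposition~\ref{blimitcases}.

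The main obstacle is recognizing the double-series symmetry in the second step: a priori there is no reason to expect $\sum_{n \geq 1}\frac{(q/a)^n}{(1-aq^n)^2}$ and $\frac{1}{a^2}\sum_{n \geq 1}\frac{n(qa)^n}{1-q^n/a}$ to coincide, yet both expand to the same double geometric series and are exchanged by the trivial interchange $n \leftrightarrow m$. Once this observation is made, the remaining work---the specialization of Proposition~\ref{alimitcases} and the algebraic rearrangement using $(a;q)_\infty=(1-a)(qa;q)_\infty$---is routine bookkeeping. As a sanity check, at $a=1$ the factor $(1-a)^2$ kills both Lambert series and both sides reduce to $1$, consistent with the domain $a \neq q^{\pm 1}, q^{\pm 2}, \ldots$ stated in the proposition.
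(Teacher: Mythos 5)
Your proof is correct and takes essentially the same route as the paper: the paper specializes Proposition~\ref{alimitcases} with $c=0$ and $ad=q$, which by the $a\leftrightarrow c$ symmetry of that proposition is the same specialization as your $a=0$, $d=q/c$, and it arrives at the identical intermediate identity with the $n=0$ term split off. The paper then cites the same elementary identity $\sum_{n\ge 1}(q/a)^n/(1-aq^n)^2=a^{-2}\sum_{n\ge 1}n(qa)^n/(1-q^n/a)$ as ``a direct calculation,'' for which your double geometric expansion and $n\leftrightarrow m$ interchange is precisely the omitted verification.
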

\begin{proof}
Setting $c=0$ and $ad=q$ in proposition~\ref{alimitcases}, we conclude that
\begin{align}
\frac{(q; q)_\infty^4}{(a, q/a; q)_\infty^2}
&=\sum_{n=0}^\infty \frac{(q/a)^n}{1-aq^n}\left(n+\frac{1}{1-aq^n}\right)\label{lim:eqn1}\\
&=\frac{1}{(1-a)^2}+\sum_{n=1}^\infty \frac{n(q/a)^n}{1-aq^n}
+\sum_{n=1}^\infty \frac{(q/a)^n}{(1-aq^n)^2}.\nonumber
\end{align}
By a direct calculation, we can find the following elementary identity:
\[
\sum_{n=1}^\infty \frac{(q/a)^n}{(1-aq^n)^2}
=a^{-2}\sum_{n=1}^\infty \frac{n (qa)^n}{1-q^n/a}.
\]
Substituting this equation into (\ref{lim:eqn1}) and then
multiplying both sides of the resulting equation by $(1-a)^2,$ we complete the proof
of Proposition~\ref{blimitcases}.
\end{proof}
On taking $a=-1$ Proposition~\ref{blimitcases}, we immediately conclude that
\[
\frac{(q; q)_\infty^4}{(-q; q)_\infty^4}
=1+8\sum_{n=1}^\infty \frac{n(-q)^n}{1+q^n}.
\]
On replacing $q$ by $-q$ in this equation, we can obtain Jacobi's four-square identity
(see, for example \cite[p. 61]{Berndt2006})
\[
\left(\sum_{n=-\infty}^\infty q^{n^2}\right)^4
=1+8\sum_{n=1}^\infty \frac{nq^n}{1-q^n}
-32\sum_{n=1}^\infty \frac{nq^{4n}}{1-q^{4n}}.
\]
On writing $q$ by $q^2$ and then setting $a=q$ in the first identity in (\ref{lim:eqn1}), we deduce that
\begin{align*}
\frac{(q^2; q^2)_\infty^4}{(q; q^2)_\infty^4}
&=\sum_{n=0}^\infty \frac{nq^n}{1-q^{2n+1}}
+\sum_{n=0}^\infty \frac{q^n}{(1-q^{2n+1})^2}\\
&=\sum_{n=0}^\infty \frac{nq^n}{1-q^{2n+1}}+\sum_{n=0}^\infty \frac{(n+1)q^n}{1-q^{2n+1}}\\
&=\sum_{n=0}^\infty \frac{(2n+1)q^n}{1-q^{2n+1}},
\end{align*}
which is equivalent to the Legendre four triangular numbers identity
(see, for example \cite[p. 72]{Berndt2006})
\[
\left( \sum_{n=0}^\infty q^{n(n+1)/2}\right)^4
=\sum_{n=0}^\infty \frac{(2n+1)q^n}{1-q^{2n+1}}.
\]

\end{document}